\numberwithin{equation}{section}
\newtheorem{theorem}{Theorem}[section]
\newtheorem{lemma}[theorem]{Lemma}
\newtheorem{proposition}[theorem]{Proposition}
\newtheorem{definition}[theorem]{Definition}
\def\X{{\mathrm{X}}}
\def\R{\mathbb R}
\def\H{\mathbb H}
\begin{document}

\begin{frontmatter}



\title
{Lower bound of Riesz transform kernels and Commutator Theorems on stratified nilpotent\\ Lie groups}


\author[a]{Xuan Thinh Duong\fnref{fn1}}
\ead{xuan.duong@mq.edu.au}

\author[b]{Hong-Quan Li\corref{cor1}\fnref{fn2}}
\ead{hongquan\_li@fudan.edu.cn}
\cortext[cor1]{Corresponding author}

\author[a]{Ji Li\corref{}\fnref{fn1}}
\ead{ji.li@mq.edu.au}

\author[c]{Brett D. Wick\corref{}\fnref{fn3}}
\ead{wick@math.wustl.edu}

\address[a]{Department of Mathematics, Macquarie University, NSW, 2109, Australia}
\address[b]{School of Mathematical Sciences, Fudan University, 220 Handan Road, Shanghai 200433, People's Republic of China}
\address[c]{Department of Mathematics,
         Washington University - St. Louis,
         St. Louis, MO 63130-4899 USA}

\fntext[fn1]{Supported by ARC DP 160100153.}
\fntext[fn2]{Partially supported by NSF of China (Grants No. 11625102 and No. 11571077) and ``The
Program for Professor of Special Appointment (Eastern Scholar) at Shanghai Institutions
of Higher Learning''.}
\fntext[fn3]{Partially supported by National Science Foundation -- DMS \# 1560955.}

\begin{abstract}
We provide a study of the Riesz transforms on stratified nilpotent Lie groups, and obtain a certain version of the pointwise lower bound of the Riesz transform kernel. Then we establish the characterisation of the BMO space on  stratified nilpotent Lie groups via the boundedness of the commutator of the Riesz transforms and the BMO function. This extends the well-known Coifman, Rochberg, Weiss theorem from Euclidean space to the setting of stratified nilpotent Lie groups.  In particular, these results apply to the well-known example of the Heisenberg group. As an application, we also study the curl operator on the Heisenberg group and stratified nilpotent Lie groups, and establish the div-curl lemma with respect to the Hardy space on  stratified nilpotent Lie groups.

\end{abstract}

\begin{keyword}



Stratified nilpotent Lie groups\sep Riesz transforms\sep  BMO space\sep commutator\sep  Nehari theorem\sep div-curl lemma

\MSC[2010] 43A17\sep 42B20\sep 43A80

\end{keyword}

\end{frontmatter}




\section{Introduction and statement of main results}
\setcounter{equation}{0}

The aim of this paper is to study the lower bound of the kernel of Riesz transform on stratified nilpotent Lie groups, and
to establish the characterisation of the BMO space via the commutators of Riesz transforms and BMO functions. As an application, we also study the div-curl lemma on stratified nilpotent Lie groups.

Recall that the classical Hardy space $H^p$, $0<p<\infty$, on the unit disc
$\mathbb{D}=\{ z\in\mathbb{C}:\ |z|<1\}$ of the complex plane is defined as the space of holomorphic functions that satisfy
$$  \|f\|_{H^p(\mathbb{D})}:=\sup_{0\leq r<1} \Big( {1\over 2\pi}\int_0^{2\pi}  \big|f(re^{it})\big|^p dt \Big)^{1\over p}<\infty.  $$
It is well-known that the product of two
 $H^2(\mathbb{D})$ functions belongs to Hardy space $H^1(\mathbb{D})$, but in fact the converse is also true and  known as
 {\it  the Riesz factorization theorem}:
 {\it ``A function $f$ is in $H^1(\mathbb{D})$ if and only if there exist $g,h\in H^2(\mathbb{D})$ with
 $f= g\cdot h$ and $\|f\|_{H^1(\mathbb{D})} =\|g\|_{H^2(\mathbb{D})}\|h\|_{H^2(\mathbb{D})}$.''
  }
The same result holds for the  Hardy space $H^1$ on the unit circle $\mathbb {T}$. This factorisation result plays an important role in obtaining the following theorem:
 $$ \|[b,H]\|_{2\to2} \approx \|b\|_{\rm BMO(\mathbb T)}, $$
 where $[b,H](f)=b H(f) - H(bf)$ is the commutator of the Hilbert transform $H$ and a function $b$.  An interpretation of this result can be given in the language of Hankel operators and then can recover a famous result of Nehari, \cite{N}.
 We refer to \cite{L} for the history and literature of the Nehari theorem.

The real-variable Hardy space theory on
$n$-dimensional Euclidean space $\R^n$ ($n\geq1$) play an important
role in harmonic analysis and has been systematically developed.
However, the analogue of the Riesz factorisation theorem, sometimes referred to as strong factorisation,
is not true for real-variable Hardy space $H^1(\mathbb{R}^n)$.  Nevertheless,
Coifman, Rochberg and Weiss \cite{CRW} established a weak factorisation of the Hardy space $H^1(\mathbb{R}^n)$ via a bilinear form related to the Riesz transform (Hilbert transform in dimension 1).  This factorisation follows via functional analysis and duality from the characterisation of the space ${\rm BMO}(\R^n)$
via the commutators of the Riesz transforms and a function $b$, i.e.,
$$\sum_{j=1}^n \|[b,R_j]:L^2(\R^n)\to L^2(\R^n)\| \approx \|b\|_{{\rm BMO}(\R^n)}.$$
See also  \cite{U} for a direct proof of the weak factorisation. The proof of the lower bound of the commutator here relies on the
spherical harmonic expansion of the Riesz kernel.

There are quite a few recent developments on commutator results in different settings.  For example, in \cite{HLW} they obtained the characterisation of weighted BMO space via the two weight commutators of Riesz transforms. See also \cite{DHLWY} for the Riesz transforms associated with Neumann Laplacian  and \cite{DLW} for the Ahlfors--Beurling operators. In these  results listed above, to obtain the lower bound, they used the  expansion of the Riesz kernels, which relies on Fourier transforms.

In \cite{Ler} and \cite{GLW}, they considered the class of rough homogeneous singular integrals, which is of the  form
$T(f)(x) = \int_{\R^n}{\Omega(x-y)}|x-y|^{-n} f(y)dy$, and obtained the characterisation of weighted BMO space via the
two weight commutator of $T$, by assuming certain conditions on the homogeneous function $\Omega(x)$ of degree zero.
The proof of the lower bound depends on the specific form of the kernel and the assumptions on $\Omega$.

In \cite{DLWY} and \cite{LNWW}, they studied the commutators of the Bessel Riesz transforms and of the Cauchy integral operator respectively, where the Fourier transform expansion of the kernels is not available and those kernels are not of the homogeneous type as studied above. They obtained the lower bound of the commutator by constructing a suitable concrete weak factorisation of the appropriate Hardy spaces according to the specific expression of the kernel of the operators, and then proved that this weak factorisation implied the lower bound for the appropriate BMO norm.

\medskip

Inspired by the results above, it is natural to ask
whether this result holds on the Heisenberg groups $\mathbb H^n$. Note that in several complex variables,
the Heisenberg groups $\mathbb H^n$ is the boundary of the Siegel upper half space, whose roles are holomorphically equivalent to the unit sphere and the unit ball in $\mathbb C^n$. To be more specific, we aim to study the following question:  Does one have
\begin{align}\label{Question}
 \|[b,R_j]: {L^2(\mathbb H^n)\to L^2(\mathbb H^n)}\| \approx \|b\|_{{\rm BMO}(\mathbb H^n)} ?
\end{align}
Here $R_j$ is the $j$th Riesz transform on $\mathbb H^n$ and ${\rm BMO}(\mathbb H^n)$ is the BMO space
defined and studied in Folland--Stein \cite{FoSt}.  We provide an affirmative answer to this question, and in fact we prove this theorem in the general setting of stratified nilpotent Lie groups $\mathcal G$.

We point out that for general stratified nilpotent Lie groups,
there is no explicit kernel expression for the heat semigroup $e^{-tL}$ generated by the sub-Laplacian $L$ on $\mathcal G$.
Hence there is no explicit kernel expression for the Riesz transforms $R_j$ on $\mathcal G$. Thus, the method of spherical harmonic expansion (such as in \cite{CRW, HLW}) does not apply, and the method for rough homogeneous singular integrals (such as in \cite{GLW,Ler}) does not apply either. To obtain our theorem on $\mathcal G$, we  use the approach in \cite{LW}, which was inspired by \cite{U}. To achieve this, we establish a suitable version of pointwise kernel lower bound of the Riesz transform $R_j$ on $\mathcal G$, which is new  and should be useful for other  problems.

To be more specific, suppose $\mathcal G$ is a stratified nilpotent Lie group. Let $\{\X_j\}_{1 \leq j \leq n}$
be a basis for the left-invariant vector fields of degree one on $\mathcal G$.
Let $\Delta = \sum_{j = 1}^n \X_j^2 $ be the sub-Laplacian
on $\mathcal G$. Consider the $j^{\mathrm{th}}$  Riesz transform on $\mathcal G$ which is defined as $R_j:= \X_j (-\Delta)^{-\frac{1}{2}}$.

It is well-known that the Riesz transform $R_j$ is a Calder\'on--Zygmund operator on $\mathcal G$, i.e.,
it is  bounded on $L^2(\mathcal G)$ and the kernel satisfies the corresponding size and smoothness conditions, see for example \cite{CG,FoSt}. Moreover,
it is also well-known that from \cite{CG}, the set of identity operator together with Riesz transforms $\{I, R_1, R_2, \ldots, R_n\}$ characterises the Hardy space
$H^1(\mathcal G)$ introduced and studied by Folland and Stein \cite{FoSt}.

We now recall the BMO space on $\mathcal G$, which is the dual space of $H^1(\mathcal G)$ \cite[Chapter 5]{FoSt}, defined as
$$ {\rm BMO}(\mathcal G):=\{ b\in L^1_{loc}(\mathcal G):\  \|b\|_{{\rm BMO}(\mathcal G)}<\infty \},$$
where
$$\|b\|_{{\rm BMO}(\mathcal G)}:=\sup_B {1\over |B|}\int_B|b(g)-b_B|dg. $$
and $b_B:={1\over |B|}\int_Bb(g)\,dg$, where $B$ denotes the ball on $\mathcal G$ defined via a homogeneous norm $\rho$, see Section 2 for details.

The main results of this paper are twofold. First, we study the behaviour of the
lower bound of the kernel $K_j(g_1,g_2)$ of Riesz transform $R_j$ when $g_1$ and $g_2$ are
far away. We show that $K_j(g_1,g_2)$ satisfies a suitable version
of a non-degenerated condition (see more details of all the notation in Section 2 and Section 3 below), which is of independent interest and
will be useful in other problems.
\begin{theorem}\label{thm0}
Suppose that $\mathcal G$ is a stratified nilpotent Lie group with homogeneous dimension $Q$ and that $j=1,2,\ldots,n$. There exist constants $0 < \varepsilon_o \ll 1$ and $C(j,n)$ which depends on $j$ and $n$ only, such that for all $0<\eta\leq\varepsilon_o$, and for every $g\in \mathcal G$ there exist $g_*=g_*(j,g,r)\in\mathcal G$ with $\rho(g,g_*)=r$, the Riesz transform $R_j$ satisfies
  $$ \big|R_j\big(\chi_{B(g_*,\eta r)}\big)(g)\big|\geq C(j,n)\eta^Q, $$
  where $\chi_{B(g_*,\eta r)}$ is the characteristic function of the ball $B(g_*,\eta r)$ centered at $g_*$ with radius $\eta r$, and $Q$ is the homogeneous dimension of $\mathcal G$.
\end{theorem}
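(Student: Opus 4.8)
The plan is to reduce the statement to a non-degeneracy property of the convolution kernel of $R_j$ combined with a scaling argument. Since $\X_j$ and $\Delta$ are left-invariant, $R_j$ is a right convolution operator, $R_jf=f*\kappa_j$, and its Schwartz kernel can be written as $K_j(g,g')=\kappa_j(g'^{-1}g)$ for a convolution kernel $\kappa_j$. Using the subordination formula $(-\Delta)^{-\frac12}=c\int_0^\infty t^{-\frac12}e^{t\Delta}\,dt$ together with the identity $\X_j(f*\kappa)=f*(\X_j\kappa)$ for left-invariant fields, I would write $\kappa_j=\X_j\kappa$, where $\kappa$ is the convolution kernel of $(-\Delta)^{-\frac12}$. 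Two facts drive the argument. First, by the dilation structure of $\mathcal G$ the kernel $\kappa$ is homogeneous of degree $1-Q$ and smooth away from the identity $e$, so $\kappa_j$ is homogeneous of degree $-Q$ and smooth on $\mathcal G\setminus\{e\}$. Second, since the heat kernels $h_t$ are strictly positive, $\kappa=c\int_0^\infty t^{-\frac12}h_t\,dt>0$ everywhere on $\mathcal G\setminus\{e\}$.

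The key step is to produce a fixed ``good direction''. I claim $\kappa_j\not\equiv0$ on $\mathcal G\setminus\{e\}$: if $\X_j\kappa\equiv0$, then $\kappa$ would be constant along the flow $s\mapsto g\exp(s\X_j)$ generated by $\X_j$, so $\kappa\big(g\exp(s\X_j)\big)=\kappa(g)$ for all $s$; but $\exp(s\X_j)=\delta_s\exp(\X_j)$ for a first-layer field, whence $\rho\big(g\exp(s\X_j)\big)\to\infty$ as $s\to\infty$, and a positive function homogeneous of negative degree $1-Q$ must then tend to $0$, contradicting $\kappa(g)>0$. Hence there is a point $v_0$ with $\rho(v_0)=1$ and $\kappa_j(v_0)\neq0$. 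By continuity of $\kappa_j$ there are a constant $c_0>0$ and an open neighbourhood $N$ of $v_0$ on which $|\kappa_j|\ge c_0$ and $\kappa_j$ has a constant sign.

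Homogeneity and localization then conclude the proof. Given $g$ and $r$, set $g_*:=g\,(\delta_r v_0)^{-1}$, so that $g_*^{-1}g=\delta_r v_0$ and $\rho(g,g_*)=\rho(\delta_r v_0)=r$. For $g'\in B(g_*,\eta r)$, putting $w:=\delta_{1/r}(g_*^{-1}g')$ one has $\rho(w)<\eta$, and since the $\delta_s$ are automorphisms, $\delta_{1/r}(g'^{-1}g)=w^{-1}v_0$. As the continuous map $w\mapsto w^{-1}v_0$ sends $e$ to $v_0$, there is $\varepsilon_o>0$, depending only on $N$ (hence on $j,n$) and not on $g$ or $r$, such that for all $\eta\le\varepsilon_o$ and all $g'\in B(g_*,\eta r)$ the rescaled argument $w^{-1}v_0$ lies in $N$. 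By homogeneity $|\kappa_j(g'^{-1}g)|=r^{-Q}|\kappa_j(w^{-1}v_0)|\ge c_0 r^{-Q}$, with $\kappa_j(g'^{-1}g)$ of constant sign across $B(g_*,\eta r)$. Since $g\notin B(g_*,\eta r)$, the expression $R_j\chi_{B(g_*,\eta r)}(g)=\int_{B(g_*,\eta r)}\kappa_j(g'^{-1}g)\,dg'$ is an honest integral of a smooth sign-definite integrand, so there is no cancellation and
$$\big|R_j\big(\chi_{B(g_*,\eta r)}\big)(g)\big|\ge c_0 r^{-Q}\,\big|B(g_*,\eta r)\big|=c_0\,|B(e,1)|\,\eta^Q=:C(j,n)\,\eta^Q.$$

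The main obstacle is the non-degeneracy of the kernel: one must rule out that $\kappa_j$ vanishes identically, and do so robustly enough to secure a full open neighbourhood $N$ on which $\kappa_j$ is bounded below and sign-definite, so that the small-ball integral accumulates without cancellation. The strict positivity of $\kappa$ inherited from the heat semigroup is precisely what makes the flow-invariance argument succeed uniformly in $j$; once a single non-vanishing direction $v_0$ is found, the dilation invariance of $\mathcal G$ is what renders the threshold $\varepsilon_o$ independent of $g$ and $r$.
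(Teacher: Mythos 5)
Your proposal is correct and follows essentially the same route as the paper: you rule out $K_j\equiv 0$ by observing that $\X_j$ applied to the positive, decaying kernel of $(-\Delta)^{-1/2}$ cannot vanish identically along the one-parameter subgroup $\exp(s\X_j)$ (the paper phrases this by testing against a bump function $\phi_*$, but the mechanism is identical), and you then combine smoothness, homogeneity of degree $-Q$, and left-invariance to obtain a sign-definite lower bound on a rescaled neighbourhood, integrating without cancellation over $B(g_*,\eta r)$. The only difference is that the paper proves a slightly stronger two-ball version (letting the first argument range over $B(g,\eta r)$ as well, which it needs later for the commutator estimates), while you prove exactly the single-point statement of the theorem.
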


Second, based on the property of the Riesz transform kernel, we establish the following commutator theorem
on stratified nilpotent Lie groups $\mathcal G$ via constructing a suitable version of weak factorisation of the Hardy space $H^1(\mathcal G)$, which covers the setting of Heisenberg groups and hence give a positive answer to the question in \eqref{Question}.
\begin{theorem}\label{thm1}
Suppose that $\mathcal G$ is a stratified nilpotent Lie group and that $1<p<\infty$ and $j=1,2,\ldots,n$. Then the commutator of $b\in {\rm BMO}(\mathcal G)$ and the Riesz transform  $R_j$ satisfies
  $$ \|[b,R_j]: {L^p(\mathcal G)\to L^p(\mathcal G)}\| \approx \|b\|_{{\rm BMO}(\mathcal G)}, $$
  where the implicit constants are independent of the function $b$.
\end{theorem}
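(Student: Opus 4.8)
The plan is to prove the two estimates separately. For the upper bound $\|[b,R_j]\colon L^p(\mathcal G)\to L^p(\mathcal G)\|\lesssim\|b\|_{{\rm BMO}(\mathcal G)}$ I would invoke the standard Calder\'on--Zygmund machinery on the space of homogeneous type $(\mathcal G,\rho,dg)$. Since $R_j$ is a Calder\'on--Zygmund operator, the classical Coifman--Rochberg--Weiss scheme applies: one proves the pointwise sharp maximal estimate
$$M^{\#}\big([b,R_j]f\big)(g)\lesssim\|b\|_{{\rm BMO}(\mathcal G)}\Big(M_s\big(R_jf\big)(g)+M_sf(g)\Big)$$
for a fixed $s>1$, where $M^{\#}$ is the sharp maximal operator and $M_s$ the $s$-maximal operator, using the John--Nirenberg inequality on $\mathcal G$ for the exponential integrability of $b$, and then combines this with the $L^p$-boundedness of $M^{\#}$, of $M_s$, and of $R_j$. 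This direction needs no information about the lower bound of the kernel.

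The content of the theorem is the lower bound $\|b\|_{{\rm BMO}(\mathcal G)}\lesssim\|[b,R_j]\colon L^p(\mathcal G)\to L^p(\mathcal G)\|$, which I would obtain from a weak factorisation of $H^1(\mathcal G)$ together with duality. Introduce the bilinear form
$$\Pi(f,g):=(R_jf)\,g-f\,(R_j^{*}g),\qquad f\in L^p(\mathcal G),\ g\in L^{p'}(\mathcal G),$$
where $R_j^{*}$ is the $L^2$-adjoint, again a Calder\'on--Zygmund operator. A direct computation using the adjoint gives the two identities
$$\big\langle[b,R_j]f,\,g\big\rangle=\big\langle b,\,\Pi(f,g)\big\rangle\qquad\text{and}\qquad\int_{\mathcal G}\Pi(f,g)(u)\,du=0,$$
the second of which shows that $\Pi(f,g)$ always has vanishing integral, the cancellation needed to land in $H^1(\mathcal G)$. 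Since ${\rm BMO}(\mathcal G)=\big(H^1(\mathcal G)\big)^{*}$, it is enough to prove that every $h\in H^1(\mathcal G)$ can be written as $h=\sum_k\lambda_k\,\Pi(f_k,g_k)$ with $\sum_k|\lambda_k|\,\|f_k\|_p\,\|g_k\|_{p'}\lesssim\|h\|_{H^1(\mathcal G)}$: inserting this into the first identity yields $|\langle b,h\rangle|\lesssim\|[b,R_j]\|_{L^p\to L^p}\,\|h\|_{H^1(\mathcal G)}$, and taking the supremum over $\|h\|_{H^1(\mathcal G)}\le 1$ gives the claim.

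By the atomic decomposition of $H^1(\mathcal G)$ it suffices to factorise a single atom, and for this I would use Theorem \ref{thm0} as the engine. Given an atom $a$ supported on a ball $B_0=B(g_0,s)$, I choose the radius $r=s/\eta$ and apply Theorem \ref{thm0} to locate the companion point $g_*=g_*(j,g_0,r)$ with $\rho(g_0,g_*)=r$, on whose small ball $\tilde B=B(g_*,\eta r)=B(g_*,s)$ one has $|R_j(\chi_{\tilde B})(g_0)|\ge C(j,n)\eta^Q$. Setting $f=\chi_{\tilde B}$ and $g=a$, the balls $\tilde B$ and $B_0$ are separated by distance $r\gg s$, so the Calder\'on--Zygmund smoothness of $K_j$ with H\"older exponent $\delta$ forces $R_jf$ to be essentially the constant $R_jf(g_0)$ on $B_0$, with relative error of order $\eta^{\delta}$, while the term $f\,(R_j^{*}a)$ is supported in the far ball $\tilde B$ and is correspondingly small. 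Hence $\Pi(f,a)=R_jf(g_0)\,a+e$ with $\|e\|_{H^1(\mathcal G)}\le\tfrac12\|a\|_{H^1(\mathcal G)}$, and since the leading coefficient satisfies $|R_jf(g_0)|\gtrsim\eta^Q$ one can take $\lambda=1/R_jf(g_0)$ and verify $|\lambda|\,\|f\|_p\,\|g\|_{p'}\lesssim 1$ (with implicit constant depending only on the fixed $\eta$). Iterating this single-step approximation on the residual $e$ and summing the resulting geometric series produces the desired weak factorisation.

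\textbf{The main obstacle} is exactly this single-step construction. The subtlety is that the companion point $g_*$ is not a fixed geometric object but depends on $j,g_0,r$ through Theorem \ref{thm0}, so the required uniformity in $g_0$ and $s$ of both the smoothness error and the lower bound must be tracked carefully; and one has to play the Calder\'on--Zygmund decay of $K_j$ on the annular region separating $\tilde B$ from $B_0$ against the quantitative lower bound $\gtrsim\eta^Q$, choosing $\eta\le\varepsilon_o$ small enough that the error genuinely contracts. The homogeneous-type structure of $(\mathcal G,\rho,dg)$ and the pointwise lower bound of Theorem \ref{thm0} are precisely what make these two competing scales balance.
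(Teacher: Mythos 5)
Your proposal is correct and follows essentially the same route as the paper: the upper bound via standard Calder\'on--Zygmund/sharp-maximal arguments (which the paper simply cites), and the lower bound via the bilinear form $\Pi$, the identity $\langle [b,R_j]f,g\rangle=\langle b,\Pi(f,g)\rangle$, an approximate factorisation of a single atom using the companion ball from Theorem \ref{thm0} with $f=\chi_{\tilde B}$ and coefficient $1/R_jf(g_0)$, and iteration of the resulting contraction. The only point you gloss over is that the error term, though pointwise of size $\eta$ times normalised indicators, lives on two balls separated by distance $r=s/\eta$, so bounding its $H^1(\mathcal G)$ norm requires a ``dipole'' estimate (the paper's Lemma on functions with cancellation supported on two distant balls), which costs an extra factor $\log(1/\eta)$ --- still harmless since $\eta\log(1/\eta)\to 0$.
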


As an application, we study the curl operator on the stratified nilpotent Lie groups $\mathcal G$ and establish the
div-curl lemma with respect to the Folland--Stein Hardy space $H^1(\mathcal G)$. Note that the first version of div-curl lemma related to the Hardy space $H^1(\mathbb R^n)$ was due to \cite{CLMS}.
\begin{theorem}\label{thm2}
Suppose that $\mathcal G$ is a stratified nilpotent Lie group and that $E,B\in L^2(\mathcal G,\R^{n})$ are vector fields on $\mathcal G$ taking values in $\R^{n}$ and satisfy
 $$ \operatorname{div}_{\mathcal G} E=0\quad {\rm and}\quad \operatorname{curl}_{\mathcal G} B=0. $$
 Then we have $E\cdot B \in H^1(\mathcal G)$ with
 $$ \| E\cdot B \|_{H^1(\mathcal G)}\lesssim \|E\|_{L^2(\mathcal G,\R^{n})}\|B\|_{L^2(\mathcal G,\R^{n})}. $$
 \end{theorem}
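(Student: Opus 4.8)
The plan is to deduce Theorem~\ref{thm2} from the upper bound half of Theorem~\ref{thm1} together with the duality between $H^1(\mathcal G)$ and ${\rm BMO}(\mathcal G)$. Since $|E\cdot B|\le |E|\,|B|$ pointwise, the Cauchy--Schwarz inequality already gives $E\cdot B\in L^1(\mathcal G)$ with $\|E\cdot B\|_{L^1(\mathcal G)}\le \|E\|_{L^2(\mathcal G,\R^{n})}\|B\|_{L^2(\mathcal G,\R^{n})}$. Recalling that $H^1(\mathcal G)$ is the predual of ${\rm BMO}(\mathcal G)$ (equivalently $H^1(\mathcal G)=({\rm VMO}(\mathcal G))^*$), it then suffices to test $E\cdot B$ against a dense class of symbols and prove
\[
  \big|\langle E\cdot B,\,b\rangle\big|\ \lesssim\ \|b\|_{{\rm BMO}(\mathcal G)}\,\|E\|_{L^2(\mathcal G,\R^{n})}\,\|B\|_{L^2(\mathcal G,\R^{n})}\qquad\text{for all }b\in {\rm BMO}(\mathcal G)\cap L^\infty(\mathcal G).
\]
The two hypotheses enter as follows: $\operatorname{div}_{\mathcal G}E=0$ will annihilate the ``main'' term of the pairing, while $\operatorname{curl}_{\mathcal G}B=0$ will represent $B$ as a horizontal gradient.

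Introduce the companion Riesz transform $\widetilde R_j:=(-\Delta)^{-1/2}\X_j$, which is minus the $L^2$-adjoint of $R_j$: since $\X_j^*=-\X_j$ and $(-\Delta)^{-1/2}$ is self-adjoint, one has $R_j^*=-\widetilde R_j$, and for real-valued $b$ one checks $[b,\widetilde R_j]^*=[b,R_j]$. Hence $\|[b,\widetilde R_j]\|_{L^2\to L^2}=\|[b,R_j]\|_{L^2\to L^2}\lesssim \|b\|_{{\rm BMO}(\mathcal G)}$ by the upper bound in Theorem~\ref{thm1}. The first key step uses $\operatorname{curl}_{\mathcal G}B=0$: setting $h:=-\,(-\Delta)^{-1/2}\operatorname{div}_{\mathcal G}B=-\sum_{k}\widetilde R_k B_k$, the curl-free condition is precisely what forces $B_j=R_j h$ for every $j$. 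Writing $B=\nabla_{\mathcal G}\psi$ for the horizontal potential, an integration by parts gives $\|h\|_{L^2(\mathcal G)}^2=\langle(-\Delta)\psi,\psi\rangle=\sum_k\|\X_k\psi\|_{L^2(\mathcal G)}^2=\|B\|_{L^2(\mathcal G,\R^{n})}^2$, so $\|h\|_{L^2(\mathcal G)}=\|B\|_{L^2(\mathcal G,\R^{n})}$.

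With these facts the pairing unwinds cleanly. Using $B_j=R_j h$ and $R_j^*=-\widetilde R_j$,
\[
  \langle E\cdot B,\,b\rangle=\sum_{j=1}^n\langle R_j h,\,E_j b\rangle=-\sum_{j=1}^n\big\langle h,\,\widetilde R_j(b\,E_j)\big\rangle=-\sum_{j=1}^n\big\langle h,\,b\,\widetilde R_j E_j\big\rangle+\sum_{j=1}^n\big\langle h,\,[b,\widetilde R_j]E_j\big\rangle,
\]
where $[b,\widetilde R_j]E_j=b\,\widetilde R_jE_j-\widetilde R_j(b\,E_j)$. The first sum vanishes because $\sum_{j}\widetilde R_jE_j=(-\Delta)^{-1/2}\operatorname{div}_{\mathcal G}E=0$, which is exactly the divergence-free hypothesis. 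Only the commutator term survives, and
\[
  \big|\langle E\cdot B,\,b\rangle\big|\le \sum_{j=1}^n\|h\|_{L^2(\mathcal G)}\,\|[b,\widetilde R_j]E_j\|_{L^2(\mathcal G)}\lesssim \|b\|_{{\rm BMO}(\mathcal G)}\,\|h\|_{L^2(\mathcal G)}\sum_{j=1}^n\|E_j\|_{L^2(\mathcal G)},
\]
which with $\|h\|_{L^2(\mathcal G)}=\|B\|_{L^2(\mathcal G,\R^{n})}$ and $\sum_j\|E_j\|_{L^2(\mathcal G)}\le \sqrt n\,\|E\|_{L^2(\mathcal G,\R^{n})}$ gives the claimed estimate, hence $E\cdot B\in H^1(\mathcal G)$.

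I expect the genuine difficulty to lie in the curl-free representation step rather than in the (now routine) duality computation. On $\R^n$ the chain ``$\operatorname{curl}B=0\Rightarrow B=\nabla\psi\Rightarrow B_j=R_jh$'' is the classical Poincar\'e lemma together with commuting Fourier multipliers; on a stratified group the horizontal distribution is bracket-generating and the fields $\X_j$ do not commute, so the correct definition of $\operatorname{curl}_{\mathcal G}$ must incorporate the structure constants, and one must verify that this condition genuinely forces $B_j=R_jh$ globally with $h\in L^2(\mathcal G)$. This is where the precise definition of $\operatorname{curl}_{\mathcal G}$ from the application section is indispensable, and where the non-commutativity obliges us to carry both $R_j$ and $\widetilde R_j$. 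A secondary, purely technical, point is the passage from the tested inequality to membership in $H^1(\mathcal G)$: one tests only against $b\in{\rm BMO}(\mathcal G)\cap L^\infty(\mathcal G)$, so that every bracket is a legitimate $L^2$ pairing, and then invokes $H^1(\mathcal G)=({\rm VMO}(\mathcal G))^*$ together with $E\cdot B\in L^1(\mathcal G)$ to upgrade the functional bound to the stated $H^1$ estimate.
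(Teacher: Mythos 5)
Your proposal is correct and follows essentially the same route as the paper: the curl-free hypothesis yields $B_j=R_j\psi$ for an $L^2$ potential $\psi$ with $\|\psi\|_{L^2}=\|B\|_{L^2}$ (your $h$ is exactly the paper's $\psi$), the divergence-free hypothesis annihilates the main term via $\sum_j R_j^t E_j=0$, and the pairing with $b\in{\rm BMO}(\mathcal G)$ is controlled by the commutator upper bound from Theorem~\ref{thm1} together with $H^1$--${\rm BMO}$ duality. The only cosmetic difference is that you land the commutator on $E_j$ via the adjoint Riesz transform $\widetilde R_j$, whereas the paper writes the same quantity as $\sum_j\langle [b,R_j]\psi, E_j\rangle$; these are adjoint formulations of the identical estimate.
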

For the details of the definition of ${\rm curl}_{\mathcal G}$ and the proof, we refer to Section~5.

This paper is organised as follows. In Section 2 we recall necessary preliminaries on stratified nilpotent Lie groups $\mathcal G$ and Heisenberg groups $\mathbb H^n$. Theorem \ref{thm0} will be proved in Section 3, where we establish a suitable version of lower bound of the kernel of
Riesz transforms associated with the sub-Laplacian on $\mathcal G$. Then in { Section 4}, we give a weak factorisation of the Hardy space $H^1(\mathcal G)$ and then provide the proof of Theorem \ref{thm1}. In the last section we
study the curl operators and then provide the proof of Theorem \ref{thm2}.

\section{Preliminaries on stratified nilpotent Lie groups $\mathcal G$ and on Heisenberg groups $\mathbb H^n$}
\setcounter{equation}{0}

Recall that a connected, simply connected nilpotent Lie group $\mathcal{G}$ is said to be stratified if its left-invariant Lie algebra $\mathfrak{g}$ (assumed real and of finite dimension) admits a direct sum decomposition
\begin{align*}
\mathfrak{g} = \bigoplus_{i = 1}^k V_i \  \mbox{where $[V_1, V_i] = V_{i + 1}$ for $i \leq k - 1$.}
\end{align*}
One identifies $\mathfrak{g}$ and $\mathcal{G}$ via the exponential map
\begin{align*}
\exp: \mathfrak{g} \longrightarrow \mathcal{G},
\end{align*}
which is a diffeomorphism.

We fix once and for all a (bi-invariant) Haar measure $dx$ on $\mathcal G$ (which is just the lift of Lebesgue measure on $\frak g$ via $\exp$).

There is a natural family of dilations on $\frak g$ defined for $r > 0$ as follows:
\begin{align*}
\delta_r \left( \sum_{i = 1}^k v_i \right) = \sum_{i = 1}^k r^i v_i, \quad \mbox{with $v_i \in V_i$}.
\end{align*}
This allows the definition of dilation on $\mathcal{G}$, which we still denote by $\delta_r$.

We choose once and for all a basis $\{\X_1, \cdots, \X_n\}$ for $V_1$ and consider the sub-Laplacian $\Delta = \sum_{j=1 }^n \X_j^2 $. Observe that $\X_j$ ($1 \leq j \leq n$) is homogeneous of degree $1$ and $\Delta$ of degree $2$ with respect to the dilations in the sense that:
\begin{align*}
&\X_j \left( f \circ \delta_r \right) = r \, \left( \X_j f \right) \circ \delta_r, \qquad  1 \leq j \leq  n, \  r > 0, \ f \in C^1, \\
&\delta_{\frac{1}{r}} \circ \Delta \circ \delta_r = r^2 \, \Delta, \quad \forall r > 0.
\end{align*}

Let $Q$ denote the homogeneous dimension of $\mathcal{G}$, namely,
\begin{align}\label{homo dimension}
Q= \sum_{i=1}^k i\, {\rm dim} V_i.
\end{align}
And let $p_h$ ($h > 0$)  be the heat kernel (that is, the integral kernel of $e^{h \Delta}$)
on $\mathcal G$. For convenience, we set $p_h(g) = p_h(g, o)$ (that is, in this article, for a convolution operator, we will identify the integral kernel with the convolution kernel) and $p(g) = p_1(g)$.

Recall that (c.f. for example \cite{FoSt})
\begin{align} \label{hkp1}
p_h(g) = h^{-\frac{Q}{2}} p(\delta_{\frac{1}{\sqrt{h}}}(g)), \qquad  \forall h > 0, \  g \in \mathcal G.
\end{align}

The kernel of the $j^{\mathrm{th}}$  Riesz transform $\X_j (-\Delta)^{-\frac{1}{2}}$ ($1 \leq j \leq  n$) is written simply as $K_j(g, g') = K_j(g'^{-1} \circ g)$. It is well-known that
\begin{align} \label{kjs}
K_j \in C^{\infty}(\mathcal G \setminus \{o\}), \ K_j(\delta_r(g)) = r^{-Q} K_j(g), \quad \forall g \neq o, \ r > 0, \ 1 \leq j \leq  n
\end{align}
which also can be explained by \eqref{hkp1} and the fact that
\begin{align*}
K_j(g) = \frac{1}{\sqrt{\pi}} \int_0^{+\infty} h^{-\frac{1}{2}} \X_j p_h(g) \, dh  = \frac{1}{\sqrt{\pi}} \int_0^{+\infty} h^{- \frac{Q}{2} - 1} \left( \X_j p \right)(\delta_{\frac{1}{\sqrt{h}}}(g)) \, dh.
\end{align*}

Next we recall the homogeneous norm $\rho$ (see for example \cite{FoSt}) on $\mathcal G$
defined to be a continuous function
$g\to \rho(g)$ from $\mathcal G$ to $[0,\infty)$, which is $C^\infty$ on $\mathcal G\backslash \{o\}$
and satisfies
\begin{enumerate}
\item[(a)] $\rho(g^{-1}) =\rho(g)$;
\item[(b)] $\rho({ \delta_r(g)}) =r\rho(g)$ for all $g\in \mathcal G$ and $r>0$;
\item[(c)] $\rho(g) =0$ if and only if $g=o$.
\end{enumerate}
For the existence (also the construction) of the homogeneous norm $\rho$ on  $\mathcal G$,
we refer to  \cite[Chapter 1, Section A]{FoSt}. For convenience,
we set
\begin{align*}
\rho(g, g') = \rho(g'^{-1} \circ g) = \rho(g^{-1} \circ g'), \quad \forall g, g' \in \mathcal{G}.
\end{align*}
Recall that (see \cite{FoSt}) this defines a quasi-distance in sense of  Coifman-Weiss, namely, there exists a constant $C \geq1 $ such that
 \begin{align} \label{qdr}
 \rho(g_1, g_2) \leq C \, \left( \rho(g_1, g') + \rho(g', g_2)   \right), \qquad \forall g_1, g_2, g' \in  \mathcal{G}.
\end{align}
In the sequel, we fixe a homogeneous norm $\rho$ on  $\mathcal G$.

We now denote by $d$ the Carnot--Carath\'eodory metric associated to the vector fields $\{\X_j\}_{1\leq j\leq n}$, which is equivalent to $\rho$ in the sense that: there exist $C_1>0$ and $C_2\geq1$ such that
for every $g_1,g_2\in\mathcal G$ (see \cite{BLU}),
\begin{align} \label{equi metric d}
 C_1\rho(g_1,g_2)\leq d(g_1,g_2)\leq C_2 \rho(g_1,g_2).
\end{align}
We point out that the Carnot--Carath\'eodory metric $d$ even on the most special stratified Lie group, the Heisenberg group, is not smooth on $\mathcal G \setminus \{o\}$.

In the sequel, for $g\in\mathcal G$ and $r>0$, {$B(g,r)$
denotes the open ball defined by $\rho$.
Without} loss of generality, we may suppose that the Haar measure has been normalized such that the measure of $B(o, 1)$, $|B(o, 1)| = 1$.

\medskip
We also recall the definition for Heisenberg group. Recall that $\mathbb{H}^{n}$ is
the Lie group with underlying manifold $\mathbb{C}^{n} \times \mathbb{R}= \{[z,t]: z \in \mathbb{C}^{n}, t \in \mathbb{R}\}$
and multiplication law
\begin{align*}
[z,t]\circ [z', t']&=[z_{1},\cdots,z_{n}, t] \circ [z_{1}',\cdots,z_{n}', t']\\
&: =\Big[z_{1}+z_{1}',\cdots, z_{n}+z_{n}',
t+t'+2 \mbox{Im} \big(\sum_{j=1}^{n}z_{j} \bar{z}'_{j}\big) \Big].
\end{align*}
The identity of $\mathbb{H}^{n}$ is the origin and the inverse is given
by $[z, t]^{-1} = [-z,-t]$.
Hereafter, we agree to identify $\mathbb{C}^{n}$  with $\mathbb{R}^{2n}$ and to use the following notation to denote the
points  of $\mathbb{C}^{n} \times \mathbb{R} \equiv \mathbb{R}^{2n+1}$:
$g=[z,t] \equiv [x,y,t] = [x_{1},\cdots, x_{n}, y_{1},\cdots, y_{n},t]
$
with $z = [z_{1},\cdots, z_{n}]$, $z_{j} =x_{j}+iy_{j}$ and $x_{j}, y_{j}, t \in \mathbb{R}$ for $j=1,\ldots,n$. Then, the
composition law $\circ$ can be explicitly written as
\begin{equation*}
g\circ g'=[x,y,t] \circ [x',y',t'] = [x+x', y+y', t+t' + 2\langle y, x'\rangle -2\langle x, y'\rangle],
\end{equation*}
where $\langle \cdot, \cdot\rangle$ denotes the usual inner product in $\mathbb{R}^{n}$.

The Lie algebra of the left invariant vector fields of $\H^n$ is generated by (here and in the following, we shall identify vector fields and the associated first order differential operators)
\begin{align}\label{XYT}
X_j={\partial \over \partial x_j} + 2y_j {\partial\over\partial t}, \quad Y_j={\partial \over \partial y_j} - 2x_j {\partial\over\partial t}, \quad T={\partial\over \partial t},
\end{align}
for $j=1,\ldots,n$. We now denote $X_{n+j}=Y_j$ for $j=1,\ldots,n$.

\vspace{0.2cm}

{\bf Notation:} In what follows, $C$, $C'$, etc. will denote various constants which depend only on the triple
$(\mathcal{G}, \rho, \{ \X_j \}_{1 \leq j \leq n})$, and on $p$ in Section \ref{S4}. By $A\lesssim B$, we shall mean $A\le C B$ with such a constant $C$, and similarly $A\approx B$ stands for both $A \le C B$  and  $B\le C A$.

\section{Lower bound for kernel of Riesz transform $R_j:= \X_j (-\Delta)^{-\frac{1}{2}}$ and proof of Theorem \ref{thm0}}
\setcounter{equation}{0}

In this section, we establish two versions of the lower bound for kernel of Riesz transform $R_j:= \X_j (-\Delta)^{-\frac{1}{2}}$, $j=1,\ldots,n$, on stratified nilpotent Lie groups $\mathcal G$, see Propositions \ref{lem lower bound} and \ref{lem lower bound 2}. These two versions of kernel lower bound are of independent interest and will be useful in studying other related problems. We point out that Theorem \ref{thm0} follows directly from Proposition \ref{lem lower bound 2}.

Here we will use the Carnot--{Carath\'eodory} metric $d$ associated to $\{\X_j\}_{1\leq j\leq n}$ to study the lower bound, and we also  make good use of the dilation structure on $\mathcal G$. It is not clear whether one can obtain similar lower bounds for the Riesz kernel on general nilpotent Lie groups which is not stratified.

To begin with, we first recall that
by the classical estimates for heat kernel and its derivations on stratified groups (see for example \cite{VSC92}), it is well-known that
\begin{align} \label{MEHT}
&|K_j(g, g')| + d(g, g') \sum_{i = 1}^n \left( |\X_{i, g} K_j(g, g')| + |\X_{i, g'} K_j(g, g')| \right) \nonumber \\
&\lesssim d(g, g')^{-Q}, \quad \forall 1 \leq j \leq n, \  g \neq g',
\end{align}
where $\X_{i, g}$ denotes the derivation with respect to $g$.

Next we have the following result showing that the Riesz kernel $K_j$ is not identically zero.
\begin{lemma}\label{lem non zero}
For all $1 \leq j \leq n$, we have $K_j \not\equiv 0$ in $\mathcal{G} \setminus \{ o \}$.
\end{lemma}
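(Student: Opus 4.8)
The plan is to argue by contradiction: suppose that $K_j \equiv 0$ on $\mathcal{G} \setminus \{o\}$ and derive a contradiction with the fact that $\X_j$ is a nonzero left-invariant vector field. The key observation is that $R_j = \X_j(-\Delta)^{-1/2}$ is a convolution Calder\'on--Zygmund operator whose full convolution kernel is a tempered distribution, homogeneous of degree $-Q$ with respect to the dilations $\delta_r$, and which coincides with the smooth function $K_j$ on $\mathcal{G}\setminus\{o\}$ in view of \eqref{kjs}. If $K_j$ vanishes off the origin, this distribution is supported at $\{o\}$; being homogeneous of degree $-Q$, it can only be a constant multiple of the Dirac mass at the identity, since any nontrivial derivative of the Dirac mass is homogeneous of degree strictly less than $-Q$. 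Consequently $R_j = cI$ for some scalar $c$, and since $K_j = \frac{1}{\sqrt{\pi}}\int_0^{+\infty} h^{-1/2}\X_j p_h\,dh$ is real-valued, the constant $c$ is real.

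It then remains to rule out $R_j = cI$ with $c \in \R$. I would use that $\X_j^* = -\X_j$ on $L^2(\mathcal{G})$ (skew-adjointness of left-invariant vector fields with respect to Haar measure) together with the fact that $(-\Delta)^{1/2}$ is a nonnegative self-adjoint operator. From $R_j = cI$ one obtains $\X_j = c(-\Delta)^{1/2}$ on the (dense) range of $(-\Delta)^{-1/2}$, while taking adjoints in $R_j^* = -(-\Delta)^{-1/2}\X_j = cI$ yields $\X_j = -c(-\Delta)^{1/2}$. Comparing the two identities forces $2c(-\Delta)^{1/2} = 0$, hence $c = 0$ and $\X_j = 0$, which is absurd. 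Alternatively, one can reach the same conclusion without the adjoint computation by invoking locality: $\X_j$ is a first-order differential operator and maps compactly supported functions to compactly supported functions, whereas $(-\Delta)^{1/2}$ does not, so the identity $\X_j = c(-\Delta)^{1/2}$ is impossible unless $c = 0$.

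The step I expect to be the main obstacle is the passage from ``$K_j \equiv 0$ off the origin'' to ``$R_j$ is a constant multiple of the identity''. This is precisely where the distributional structure on the diagonal must be handled with care: one needs to know that the full convolution kernel of $R_j$ is a tempered distribution, that its restriction to $\mathcal{G} \setminus \{o\}$ equals $K_j$, and that a distribution supported at a single point and homogeneous of degree $-Q$ is necessarily proportional to the Dirac mass there. All three ingredients belong to the standard Calder\'on--Zygmund theory of convolution operators on homogeneous groups, combined with the homogeneity recorded in \eqref{kjs}, but they are exactly what turns the otherwise one-line contradiction into a rigorous argument.
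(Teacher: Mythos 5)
Your argument is correct, but it is genuinely different from the one in the paper. You pass through the full distributional convolution kernel of $R_j$: since $R_j$ commutes with the dilations $\delta_r$, that kernel is a homogeneous distribution of degree $-Q$ whose restriction to $\mathcal G\setminus\{o\}$ is $K_j$; if $K_j$ vanished off the origin the kernel would be supported at $\{o\}$, hence a finite combination of derivatives of $\delta_o$, and homogeneity of degree exactly $-Q$ kills every term except the multiple of $\delta_o$ itself (each $\X^\alpha\delta_o$ with $\alpha\neq0$ is homogeneous of strictly lower degree). This reduces the lemma to excluding $R_j=cI$, which your skew-adjointness computation ($\X_j^*=-\X_j$ for bi-invariant Haar measure versus the self-adjointness of $(-\Delta)^{1/2}$) does cleanly; alternatively, squaring $\X_j=c(-\Delta)^{1/2}$ gives $c^{-2}\X_j^2+\sum_i\X_i^2=0$, which after pairing with $f$ forces all $\X_if=0$ --- a one-line substitute for the locality claim, which as stated needs a justification that $(-\Delta)^{1/2}$ is not local. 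The paper instead avoids distribution theory entirely: it tests $R_j$ on a fixed bump $\phi_*$ at points $\exp(r\X_j)$ far from its support (where the off-diagonal kernel representation is pointwise valid), recognizes $R_j\phi_*(\exp(r\X_j))$ as $\frac{d}{dr}\big[(-\Delta)^{-1/2}\phi_*(\exp(r\X_j))\big]$, and derives the contradiction from the two-sided Li--Yau bound $(-\Delta)^{-1/2}\phi_*>0$ together with its decay at infinity. Your route buys a sharper structural statement (the only way $K_j$ can vanish off the diagonal is $R_j=cI$) at the cost of invoking the classification of point-supported homogeneous distributions; the paper's route is more elementary and self-contained, relying only on heat-kernel estimates already quoted. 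One small point: the reality of $c$ is best justified by noting that $R_j$ maps real functions to real functions (as $\X_j$ and the subordinated heat semigroup do), rather than by the reality of $K_j$ off the diagonal, which says nothing about the coefficient of $\delta_o$.
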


\begin{proof}
We argue by contradiction. Fix
\begin{align*}
0 \leq \phi_* \in C_0^{\infty}(B(o, 1)) \  \mbox{ with } \int \phi_*(g) \, dg = 1.
\end{align*}
By our assumption, we have for all $r$ large enough
\begin{align*}
0 &= \int_{B(o, 1)} K_j(g'^{-1} \circ \exp{(r \X_j)}) \phi_*(g') \, dg' \\
&= \left( R_j \phi_* \right)(\exp{(r \X_j)})   = \left\{ \X_j \left[ (-\Delta)^{-\frac{1}{2}} \phi_* \right] \right\}(\exp{(r \X_j)}) \\
&= \frac{d}{dr} \left\{ \left[ (-\Delta)^{-\frac{1}{2}} \phi_* \right](\exp{(r \X_j)}) \right\}.
\end{align*}
Then for a suitable $r_0 \gg 1$,
\begin{align*}
0 &= - \int_{r_0}^{+\infty} \frac{d}{dr} \left\{ \left[ (-\Delta)^{-\frac{1}{2}} \phi_* \right](\exp{(r \X_j)}) \right\} \, dr \\
&= \left[ (-\Delta)^{-\frac{1}{2}} \phi_* \right](\exp{(r_0 \X_j)}) - \lim_{r \longrightarrow +\infty} \left[ (-\Delta)^{-\frac{1}{2}} \phi_* \right](\exp{(r \X_j)}).
\end{align*}

But, \eqref{equi metric d} and  the classical Li--Yau estimate for heat kernel on stratified Lie groups (c.f. for example \cite{VSC92}) imply that
\begin{align*}
(-\Delta)^{-\frac{1}{2}}(g) { \approx} \rho(g)^{1 - Q}, \quad \forall g \neq o,
\end{align*}
then we have $\left[ (-\Delta)^{-\frac{1}{2}} \phi_* \right](\exp{(r_0 \X_j)}) > 0$ and by the Fubini's theorem
\begin{align*}
\lim_{r \longrightarrow +\infty} \left[ (-\Delta)^{-\frac{1}{2}} \phi_* \right](\exp{(r \X_j)}) = 0.
\end{align*}
This leads to a contradiction. We finish the proof of this lemma.
\end{proof}

Using Lemma \ref{lem non zero}, we can deduce the following result.
\begin{proposition}\label{lem lower bound}
Fix $j=1,\ldots,n$. There exist $ 0<\varepsilon_o\ll1$ and $C(j,n)$ such that  for any $0 <\eta< \varepsilon_o$ and for all  $g \in \mathcal G$ and $r > 0$, we can find $g_* = g_*(j, g,r) \in \mathcal G$ with $\rho(g, g_*) = r$ and satisfying
\begin{align}
|K_j(g_1, g_2)| \geq C(j,n) r^{- Q}, \quad \forall g_1 \in B(g, \eta r), \  g_2 \in B(g_*, \eta r).
\end{align}
\end{proposition}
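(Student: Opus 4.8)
The plan is to exploit the homogeneity \eqref{kjs} of $K_j$ together with the left-invariance of the convolution kernel to collapse the claimed \emph{uniform} lower bound into a single \emph{continuity} statement at one fixed nonzero point, evaluated at unit scale. First I would use Lemma~\ref{lem non zero}: pick any $h\neq o$ with $K_j(h)\neq 0$, write $h=\delta_{\rho(h)}(\bar g)$ with $\rho(\bar g)=1$, and invoke $K_j(\delta_r g)=r^{-Q}K_j(g)$ to conclude that $a_j:=K_j(\bar g)\neq 0$ for a point $\bar g=\bar g(j)$ on the unit sphere $\{\rho=1\}$.

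Next I would specify $g_*$ and carry out the scaling reduction. Given $g$ and $r$, set $g_*:=g\circ\delta_r(\bar g^{-1})$; then $g_*^{-1}\circ g=\delta_r(\bar g)$, so $\rho(g,g_*)=\rho(\delta_r\bar g)=r$ as required, and $g_*$ depends on $(j,g,r)$. For $g_1\in B(g,\eta r)$ and $g_2\in B(g_*,\eta r)$ I would write $g_1=g\circ u_1$ and $g_2=g\circ\delta_r(\bar g^{-1})\circ u_2$ with $\rho(u_1),\rho(u_2)<\eta r$; a direct computation gives $g_2^{-1}\circ g_1=u_2^{-1}\circ\delta_r(\bar g)\circ u_1$. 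Since $\delta_{1/r}$ is a group automorphism and $K_j$ is homogeneous of degree $-Q$,
$$ K_j(g_1,g_2)=K_j\big(g_2^{-1}\circ g_1\big)=r^{-Q}\,K_j\big(v_2^{-1}\circ\bar g\circ v_1\big),\qquad v_i:=\delta_{1/r}(u_i), $$
where $\rho(v_i)=r^{-1}\rho(u_i)<\eta$, so that $v_1,v_2\in B(o,\eta)$. At this point both $g$ and $r$ have disappeared, leaving a scale-free estimate.

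It then remains to bound $K_j(v_2^{-1}\circ\bar g\circ v_1)$ from below for $v_1,v_2\in B(o,\eta)$ with $\eta$ small. Because multiplication and inversion are continuous and $\bar g\neq o$, the map $(v_1,v_2)\mapsto v_2^{-1}\circ\bar g\circ v_1$ is continuous, takes the value $\bar g$ at $(o,o)$, and stays in a fixed compact subset of $\mathcal G\setminus\{o\}$ for $v_i$ small; since $K_j\in C^\infty(\mathcal G\setminus\{o\})$ by \eqref{kjs}, the composition is continuous with value $a_j$ at $(o,o)$. Hence there is $\varepsilon_{o,j}\in(0,1)$ with $|K_j(v_2^{-1}\circ\bar g\circ v_1)|\geq |a_j|/2$ whenever $v_1,v_2\in B(o,\varepsilon_{o,j})$. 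Taking $\varepsilon_o:=\min_{1\leq j\leq n}\varepsilon_{o,j}$ and $C(j,n):=|a_j|/2$ (a finite minimum, hence positive), for every $0<\eta<\varepsilon_o$ the bound $|K_j(g_1,g_2)|\geq C(j,n)\,r^{-Q}$ follows from the displayed identity.

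The conceptual heart is the reduction in the first two paragraphs: recognising that scale- and translation-invariance reduce a global, uniform lower bound to a neighbourhood of one nonzero point. The only genuinely delicate part is the algebraic bookkeeping that $\delta_{1/r}$, as an automorphism, distributes across $u_2^{-1}\circ\delta_r(\bar g)\circ u_1$, together with checking that $v_2^{-1}\circ\bar g\circ v_1$ stays bounded away from $o$ so that the smoothness of $K_j$ may be used. If a quantitative modulus of continuity is wanted in place of the qualitative argument, it can be extracted from \eqref{MEHT}: near the unit sphere $d(\cdot,o)\approx 1$, so the horizontal derivatives $\X_iK_j$ are bounded there and the mean value inequality along horizontal curves controls $|K_j(v_2^{-1}\circ\bar g\circ v_1)-a_j|$ by $O(\eta)$, yielding the same conclusion with an explicit $\varepsilon_o$.
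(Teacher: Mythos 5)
Your proposal is correct and follows essentially the same route as the paper: choose a point $\tilde g_j$ on the unit sphere $\{\rho=1\}$ where $K_j\neq 0$ (via Lemma~\ref{lem non zero} and homogeneity), set $g_*=g\circ\delta_r(\tilde g_j^{-1})$, use left-invariance and the dilation automorphism to reduce $K_j(g_1,g_2)$ to $r^{-Q}K_j(v_2^{-1}\circ\tilde g_j\circ v_1)$ with $v_1,v_2\in B(o,\eta)$, and conclude by continuity of $K_j$ near $\tilde g_j$. The only cosmetic difference is that the paper quantifies the ``stays near $\tilde g_j$'' step explicitly via the quasi-triangle inequality (landing in $B(\tilde g_j,4C\varepsilon_o)$), whereas you argue by continuity and compactness; both are valid.
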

\begin{proof}
For any fixed $ j=1,\ldots,n $, by Lemma \ref{lem non zero} and the scaling property of $K_j$ (c.f. \eqref{kjs}), there exists $\tilde g_j$ satisfying
$$ \rho(\tilde g_j)=1\quad{\rm and}\quad K_j(\tilde g_j)\not=0.  $$
Since $K_j$ is a $C^\infty$ function in $\mathcal G\backslash \{o\}$, there exists $ 0<\varepsilon_o\ll1$ such that
\begin{align}\label{non zero e1}
K_j(\tilde g)\not=0\quad {\rm and} \  |K_j(\tilde g)|>{1\over 2} |K_j(\tilde g_j)|
\end{align}
for all $\tilde g\in B(\tilde g_j, 4C \varepsilon_o)$, where $C \geq 1$ is constant from \eqref{qdr}. To be more specific, we have that  for all $\tilde g\in B(\tilde g_j, 4 C \varepsilon_o)$, the values
$K_j(\tilde g)$ and $K_j(\tilde g_j)$ have the same sign.

Now for any fixed $g\in \mathcal G$, we let
\begin{align}\label{g*}
 g_*=g_*(j,g,r):= g \circ \delta_r( \tilde g_j^{-1} ).
\end{align}
Then from the choice of $g_*$, we get that
\begin{align*}
\rho(g,g_*) = \rho(g, g \circ \delta_r( \tilde g_j^{-1} )) = r \rho(\tilde g_j^{-1})= r \rho(\tilde g_j) =r.
\end{align*}

Next, for every $\eta\in(0, 2 \varepsilon_o)$, we consider the two balls $B( g,  \eta r)$ and $B( g_*,  \eta r) $.
 It is direct that for every $g_1\in B(g,  \eta r)$, we can write
 $$ g_1 = g\circ \delta_r (g'_1) $$
where $g'_1 \in B(o,  \eta)$.
Similarly, for every $g_2\in B( g_*,  \eta r)$, we can write
 $$ g_2 = g_*\circ \delta_r (g'_2) $$
 where $g'_2 \in B(o,  \eta)$.

As a consequence, we have
\begin{align}\label{dilation}
K_j(g_1,g_2) &= K_j\big(   g\circ \delta_r (g'_1) ,   g_*\circ \delta_r (g'_2)   \big)\\
&= K_j\big(   g\circ \delta_r (g'_1) ,    g \circ \delta_r( \tilde g_j^{-1} )\circ \delta_r (g'_2)   \big)\nonumber\\
&= K_j\big(    \delta_r (g'_1) ,     \delta_r( \tilde g_j^{-1} )\circ \delta_r (g'_2)   \big)\nonumber\\
&= K_j\big(    \delta_r (g'_1) ,     \delta_r( \tilde g_j^{-1} \circ g'_2)   \big)\nonumber\\
&= r^{-Q} K_j\big(    g'_1,     \tilde g_j^{-1} \circ g'_2  \big)\nonumber\\
&= r^{-Q} K_j\big(    (g'_2)^{-1} \circ    \tilde g_j \circ g'_1  \big),\nonumber
\end{align}
where the second equality comes from \eqref{g*}, the third comes from the property of the left-invariance
and the fifth comes from \eqref{kjs}.

Next, we note that
\begin{align*}
\rho\big(    (g'_2)^{-1} \circ    \tilde g_j \circ g'_1, \tilde g_j \big) &= \rho\big(      \tilde g_j \circ g'_1, g'_2 \circ  \tilde g_j \big)\\
&\leq C \, \left[ \rho\big(      \tilde g_j \circ g'_1,   \tilde g_j \big)+ \rho\big(      \tilde g_j , g'_2 \circ  \tilde g_j \big) \right]\\
&= C \, \left[  \rho\big(  g'_1,   o \big)+ \rho\big( o, g'_2 \big) \right]\\
&\leq 2 C \eta\\
&<4 C \varepsilon_o,
\end{align*}
which shows that $ (g'_2)^{-1} \circ    \tilde g_j \circ g'_1$ is contained in the ball $B(\tilde g_j, 4C \varepsilon_o)$
for all $g'_1 \in B(o,  \eta)$ and for all $g'_2 \in B(o,  \eta)$.

Thus, from  \eqref{non zero e1}, we obtain that
\begin{align}\label{lower bound e1}
| K_j\big(    (g'_2)^{-1} \circ    \tilde g_j \circ g'_1  \big)| > {1\over 2 } | K_j(\tilde g_j)|
\end{align}
and for all $g'_1 \in B(o,  \eta)$ and for all $g'_2 \in B(o,  \eta)$,
$K_j\big(    (g'_2)^{-1} \circ    \tilde g_j \circ g'_1  \big)$ and $K_j(\tilde g_j)$ have the same sign.

Now combining the equality \eqref{dilation} and \eqref{lower bound e1} above, we obtain that
\begin{align}\label{lower bound e2}
|K_j(g_1,g_2)|   > {1\over 2 } r^{-Q} |K_j(\tilde g_j)|
\end{align}
for every $g_1\in B( g,  \eta r)$ and for every $g_2\in B( g_*,  \eta r)$, where $K_j(g_1,g_2)$ and $K_j(\tilde g_j)$ have the same sign. Here $K_j(\tilde g_j)$ is a fixed
constant independent of $\eta$, $r$, $g$, $g_1$ and $g_2$. We denote
$$ C(j,n):= {1\over 2}|K_j(\tilde g_j)|.$$

From \eqref{equi metric d} and the lower bound \eqref{lower bound e2} above, we further obtain that
for every $\eta\in (0,\varepsilon_o)$,
\begin{align}\label{lower bound e3}
|K_j(g_1,g_2)|   > C(j,n) r^{-Q}
\end{align}
for every $g_1\in B( g,  \eta r)$ and for every $g_2\in B( g_*,  \eta r)$. Moreover,
the sign of $K_j(g_1,g_2)$ is invariant
for every $g_1\in B( g,  \eta r)$ and for every $g_2\in B( g_*,  \eta r)$.

This completes the proof of Lemma \ref{lem lower bound}.
\end{proof}

Next, we also introduce another version of the lower bound of the kernel of Riesz transforms.

\begin{proposition}\label{lem lower bound 2}
There exist constants $0 < \varepsilon_o \ll 1$ and $C = \min_{1 \leq j \leq n} C(j,n)$ such that for all $j= 1,\ldots,n$, $0<\eta\leq\varepsilon_o$, $g \in \mathcal G$ and $r > 0$, we can find $g_* = g_*(j,g, r) \in \mathcal G$ with $\rho(g, g_*) = r$ satisfying that
\begin{align}\label{lower bound 2 e1}
\bigg|\int_{B(g_*, \eta r)} K_j(g_1, g_2) dg_2\bigg| \geq { C \, \eta^{ Q}, \qquad \forall g_1\in B( g,  \eta r),}
\end{align}
and that
\begin{align}\label{lower bound 2 e2}
\bigg|\int_{B(g, \eta r)} K_j(g_1, g_2) dg_1\bigg| \geq { C \,} \eta^{ Q}, \qquad \forall g_2\in B( g_*,  \eta r).
\end{align}

\end{proposition}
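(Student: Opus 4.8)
The plan is to derive Proposition \ref{lem lower bound 2} directly from Proposition \ref{lem lower bound}, exploiting the \emph{sign invariance} that was established there alongside the pointwise bound. For fixed $j$, $g$ and $r$, I would take the very same point $g_* = g_*(j,g,r)$ produced by Proposition \ref{lem lower bound}, so that the requirement $\rho(g,g_*) = r$ is automatic. The decisive observation is that Proposition \ref{lem lower bound} asserts not only $|K_j(g_1,g_2)| \geq C(j,n) r^{-Q}$ on the product $B(g,\eta r) \times B(g_*,\eta r)$, but also that $K_j(g_1,g_2)$ keeps a \emph{constant sign} there. Consequently there is no cancellation in the integral, and for every $g_1 \in B(g,\eta r)$ one gets
$$\left|\int_{B(g_*,\eta r)} K_j(g_1,g_2)\, dg_2\right| = \int_{B(g_*,\eta r)} |K_j(g_1,g_2)|\, dg_2 \geq C(j,n)\, r^{-Q}\, |B(g_*,\eta r)|.$$

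Next I would compute the volume $|B(g_*,\eta r)|$ from the group structure. By left-invariance of the Haar measure and of $\rho$ one has $|B(g_*,\eta r)| = |B(o,\eta r)|$, and since property (b) of the homogeneous norm gives $B(o,s) = \delta_s\big(B(o,1)\big)$, the dilation scaling $|\delta_s E| = s^Q |E|$ together with the normalization $|B(o,1)| = 1$ yields $|B(g_*,\eta r)| = (\eta r)^Q$. Substituting into the previous display produces $C(j,n)\, r^{-Q} (\eta r)^Q = C(j,n)\,\eta^Q \geq C\,\eta^Q$ with $C = \min_{1\le j\le n} C(j,n)$, which is exactly \eqref{lower bound 2 e1}. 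The second estimate \eqref{lower bound 2 e2} follows by the identical argument with the roles of the two balls interchanged, integrating in $g_1$ over $B(g,\eta r)$ for fixed $g_2 \in B(g_*,\eta r)$; the sign invariance and the volume identity $|B(g,\eta r)| = (\eta r)^Q$ apply verbatim.

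I do not anticipate a genuine obstacle: once Proposition \ref{lem lower bound} is in hand, the result is essentially a volume count, and the only points needing care are bookkeeping ones. First, one must quote the sign invariance correctly, since it is precisely this feature that converts the pointwise lower bound on $|K_j|$ into a lower bound on the modulus of the integral — this is the single step on which the whole proposition rests. Second, Proposition \ref{lem lower bound} is phrased for $0<\eta<\varepsilon_o$ whereas here the range is $0<\eta\le\varepsilon_o$, so I would either shrink $\varepsilon_o$ marginally or simply remark that its proof in fact goes through for $\eta$ up to $2\varepsilon_o$, comfortably absorbing the endpoint.
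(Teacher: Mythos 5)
Your proposal is correct and follows essentially the same route as the paper: it reuses the point $g_*$ from Proposition \ref{lem lower bound}, invokes the sign invariance of $K_j$ on $B(g,\eta r)\times B(g_*,\eta r)$ to move the absolute value inside the integral, and concludes by the volume count $|B(g_*,\eta r)|=(\eta r)^Q$. The only difference is that you explicitly flag the endpoint $\eta=\varepsilon_o$ and the normalization of the Haar measure, which the paper leaves implicit.
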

\begin{proof}
Based on the proof of Proposition \ref{lem lower bound} above, for any fixed $g\in \mathcal G$, we let
$$
 g_*=g_*(j,g,r)
$$
as defined in \eqref{g*}. Then we have
$\rho(g,g_*) =r.$

Next, from \eqref{lower bound e3} above, we obtain that
\begin{align}\label{ee}
|K_j(g_1,g_2)|   > C(j,n) r^{-Q}
\end{align}
for every $g_1\in B( g,  \eta r)$ and for every $g_2\in B( g_*,  \eta r)$. Moreover,
the sign of $K_j(g_1,g_2)$ is invariant
for every $g_1\in B( g,  \eta r)$ and for every $g_2\in B( g_*,  \eta r)$.

As a consequence, for this particular $g_*$ and the property \eqref{ee} above, we obtain that for all $g_1\in B( g,  \eta r)$,
\begin{align*}
\bigg|\int_{B(g_*, \eta r)} K_j(g_1, g_2) dg_2\bigg| &=\int_{B(g_*, \eta r)} \big|K_j(g_1, g_2)\big| dg_2 \\
&\geq \int_{B(g_*, \eta r)} C(j,n) r^{ -Q} dg_2\\
&=C(j,n) r^{ -Q} \eta^Q r^Q\\
&=C(j,n) \eta^Q,
\end{align*}
which gives
\eqref{lower bound 2 e1}.  Symmetrically we obtain \eqref{lower bound 2 e2}.
\end{proof}

It is clear that Theorem \ref{thm0} follows directly from Proposition \ref{lem lower bound 2}.

\section{Weak factorisation of Hardy space $H^1(\mathcal G)$ and proof of Theorem \ref{thm1}} \label{S4}
\setcounter{equation}{0}

The Hardy space $H^1(\mathcal G)$ on stratified nilpotent Lie groups $\mathcal G$ is well-known and it is covered in the general framework of Folland and Stein \cite{FoSt}.  There are many equivalent ways to define the Hardy space, for example,
the Lusin and Littlewood--Paley functions \cite[Chapter 7]{FoSt}, the maximal functions  \cite[Chapters 2 and 4]{FoSt}, the atomic decomposition \cite[Chapter 3]{FoSt}. Moreover, according to the work of Christ and Geller \cite{CG}, $H^1(\mathcal G)$ also has the equivalent characterisation via the Riesz transforms $R_j = \X_j (-\Delta)^{-1/2}$. We also note that the Hardy and BMO spaces on $\mathcal G$ were introduced and studied with respect to the homogeneous
norm $\rho$, which is $C^\infty$ on $\mathcal G\backslash \{o\}$. Hence in this section we will prove Theorem \ref{thm1} using the homogeneous
norm $\rho$.

To begin with, we  point out that for $1\leq j\leq n$ and $b\in {\rm BMO}(\mathcal G)$,  $[b,R_j]$ is bounded on $L^p(\mathcal G)$ for $1<p<\infty$ with
  \begin{align} \label{upper bound}
  \|[b,R_j]: {L^p(\mathcal G)\to L^p(\mathcal G)}\| \leq C_1\|b\|_{{\rm BMO}(\mathcal G)}.
  \end{align}
In fact, \eqref{upper bound} is covered by the upper bound of commutator $[b,T]$ on a more general setting {of spaces of homogeneous type} in the sense of Coifman and Weiss. We refer to \cite[Section 2.2]{DLOWY} for the upper bound, especially see (2.14) therein.

Now it suffices to prove the lower bound of the commutator $[b,R_j]$.
We begin with recalling two equivalent definitions of $H^1(\mathcal G)$  via the atomic decomposition and via the Riesz transforms $R_j$.
\begin{definition}\label{def Hardy atom}
 The space $H^1(\mathcal G)$ is the set of functions of the form $f=\sum_{j=1}^\infty \lambda_j a_j$ with $\{\lambda_j\}\in\ell^1$ and $a_j$ an $L^\infty$-atom, meaning that it is supported on a ball $B\subset \mathcal G$, has mean value zero $\int_B a(g)dg=0$ and has a size condition $\left\Vert a\right\Vert_{L^\infty(\mathcal G)}\leq |B|^{-1}$.  One norms this space of functions by:
$$
\left\Vert f\right\Vert_{H^1(\mathcal G)}:=\inf\bigg\{\sum_{j=1}^\infty \left\vert \lambda_j\right\vert: \{\lambda_j\}\in \ell^1, f=\sum_{j=1}^\infty \lambda_j a_j, a_j \textnormal{ an atom}\bigg\}
$$
with the infimum taken over all possible representations of $f$ via atomic decompositions. Similarly, One has the definition via $L^2$-atom, meaning that the atom $a$ is supported on a ball $B\subset \mathcal G$, has mean value zero $\int_B a(g)dg=0$ and has a size condition $\left\Vert a\right\Vert_{L^2(\mathcal G)}\leq |B|^{-1/2}$.
\end{definition}

\begin{theorem}[\cite{CG}, Theorem C]\label{thm Hardy Riesz}
The Riesz transforms $R_j=\X_j (-\Delta)^{-1/2}$ are singular integral operators and $\{I, R_1, R_2,\ldots, R_n\}$ characterises $H^1(\mathcal G)$, which the equivalent norm given by:
$$
\left\Vert f\right\Vert_{H^1(\mathcal G)}:=\left\Vert f\right\Vert_{L^1(\mathcal{G})}+\sum_{j=1}^n \|R_j f\|_{L^1(\mathcal G)}.
$$
\end{theorem}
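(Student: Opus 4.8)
The plan is to prove the claimed equivalence as two separate inequalities, reducing everything to the maximal-function description of $H^1(\mathcal G)$ supplied by Folland--Stein \cite{FoSt}. The estimate $\|f\|_{L^1}+\sum_j\|R_jf\|_{L^1}\lesssim\|f\|_{H^1}$ is the routine half. Since every $L^2$-atom $a$ on a ball $B$ obeys $\|a\|_{L^1}\le|B|^{1/2}\|a\|_{L^2}\le 1$, the atomic description in Definition~\ref{def Hardy atom} gives $\|f\|_{L^1}\le\|f\|_{H^1}$ at once, so it suffices to see that each $R_j$ maps $H^1(\mathcal G)$ into $L^1(\mathcal G)$. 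By atomic decomposition this reduces to the uniform bound $\|R_j a\|_{L^1}\lesssim 1$ over all such atoms. I would split the integral $\int_{\mathcal G}|R_j a(g)|\,dg$ into the contribution from the double ball $2B$ and from its complement: on $2B$ one applies Cauchy--Schwarz together with the $L^2$-boundedness of $R_j$ and the size bound $\|a\|_{L^2}\le|B|^{-1/2}$; on $(2B)^c$ one uses the cancellation $\int_B a=0$ to subtract $K_j(\cdot,g_B)$ and invokes the smoothness estimate \eqref{MEHT}, producing the standard integrable tail. Summing over the atoms closes this direction.

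The converse, $\|f\|_{H^1}\lesssim\|f\|_{L^1}+\sum_j\|R_jf\|_{L^1}$, is the substance of the theorem. Here I would pass to the Poisson extension: writing $\mathcal L=-\Delta$ and $P_t=e^{-t\sqrt{\mathcal L}}$, set $u_0(g,t)=P_tf(g)$ and $u_j(g,t)=P_t(R_jf)(g)$, so that $U=(u_0,u_1,\dots,u_n)$ extends the boundary data $(f,R_1f,\dots,R_nf)\in L^1(\mathcal G,\mathbb R^{n+1})$ to $\mathcal G\times\mathbb R_+$, each component solving $(\partial_t^2-\mathcal L)u_i=0$. Following the Stein--Weiss--Fefferman--Stein philosophy, the aim is to show that an appropriate power $|U|^p$ with $p$ just below $1$ is subharmonic for $\partial_t^2+\Delta$ on $\mathcal G\times\mathbb R_+$; harmonic majorisation then controls the nontangential maximal function of $u_0$, and hence $\sup_{t>0}|P_tf|$, by the $L^1$-norms of the boundary data. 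The Folland--Stein maximal-function characterisation of $H^1(\mathcal G)$ would then return $f\in H^1$ with the desired norm control.

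The main obstacle lies precisely in the subharmonicity step, and it is genuinely more severe than on $\mathbb R^n$. In the Euclidean case the conjugate system satisfies the full Cauchy--Riemann relations $\partial_t u_0=\sum_j\partial_j u_j$, $\partial_j u_0=\partial_t u_j$, and $\partial_i u_j=\partial_j u_i$, exactly because $\partial_j$ commutes with $\mathcal L$ and hence with $P_t$; these relations force $|U|^p$ to be subharmonic for $p\ge(n-1)/n$. On a stratified group the horizontal fields do \emph{not} commute, $[\X_i,\X_j]\in V_2\neq\{0\}$, so $\X_j$ commutes neither with $\mathcal L=-\sum_i\X_i^2$ nor with $P_t$; consequently the Cauchy--Riemann relations hold only up to error terms built from these commutators, and $|U|^p$ is subharmonic only modulo those errors. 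The heart of the argument---due to Christ and Geller \cite{CG}---is to control the commutator error terms, exploiting the homogeneity \eqref{kjs} of the kernels and the heat-kernel derivative bounds \eqref{MEHT}, so that the harmonic-majorisation scheme survives. Once the maximal function of $u_0$ is shown to lie in $L^1$, the converse inequality follows, and together with the easy half this yields the stated equivalence of norms.
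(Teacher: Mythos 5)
This statement is not proved in the paper at all: it is imported verbatim as Theorem C of Christ and Geller \cite{CG}, so there is no internal argument to compare yours against. Your first half (that $f\in H^1(\mathcal G)$ implies $f,\,R_jf\in L^1(\mathcal G)$ with the right norm bound) is correct and standard: atomic decomposition, Cauchy--Schwarz together with $L^2$-boundedness of $R_j$ on a fixed dilate of the supporting ball, and the cancellation of the atom combined with the smoothness estimate \eqref{MEHT} off that dilate. No complaints there.

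The converse, however, contains a genuine gap, and it sits exactly where you place the ``heart of the argument.'' The Stein--Weiss subharmonicity scheme requires the full system of generalized Cauchy--Riemann equations for $U=(u_0,u_1,\dots,u_n)$; only then is $|U|^p$ subharmonic for $p\geq (n-1)/n$, and only then does harmonic majorization control the nontangential maximal function of $u_0$ by the $L^1$ norms of the boundary data. You correctly observe that on a stratified group these relations fail because $[\X_i,\X_j]\in V_2\neq\{0\}$, so $\X_j$ commutes with neither $(-\Delta)^{1/2}$ nor $P_t$. But you then assert, with no argument, that the resulting commutator error terms ``can be controlled so that the harmonic-majorisation scheme survives.'' That assertion is the entire content of the theorem and cannot be waved at: the errors involve derivatives of $u_0$ in the higher layers of the stratification which are not pointwise dominated by $|U|$, and no exact or approximate subharmonicity inequality for $|U|^p$ is available in this setting. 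This is precisely why Christ and Geller do \emph{not} argue via conjugate systems; their proof of Theorem C runs along an entirely different route (a general nondegeneracy criterion for a family of singular integrals to characterize $H^1$, exploited through duality and approximation arguments), rather than by repairing the subharmonicity method. As written, your proposal reduces the hard implication to a step that would fail, and attributes to \cite{CG} a strategy they deliberately avoid.
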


Similar to the important result about weak factorisation of the real Hardy space on Euclidean spaces obtained by Coifman, Rochberg, and Weiss \cite{CRW}, we consider the bilinear form as:
\begin{align}\label{def of pi}
\Pi_j(G,\tilde G)(g)=G(g) R_j (\tilde G)(g)-\tilde G(g)R_j^t( G)(g), \quad j=1,\ldots, n
\end{align}
for functions {$G(g)$ and ${\tilde G}(g)$} defined on $\mathcal G$,  where $R_j$ the $j$th Riesz transform and $R_j^t$ is the transpose of $R_j$.

The main result in this section is the following factorisation for $H^1(\mathcal G)$ in terms of the bilinear form $\Pi_{j}$,  which is in direct analogy with the result in the linear case obtained by Coifman, Rochberg, and Weiss in \cite{CRW}, and provides a new characterisation for the Hardy space $H^1(\mathcal G)$ set up in \cite{FoSt}.

\begin{theorem}\label{thm weakfactorization}
Suppose {$1\leq   l \leq n$}, $1<p<\infty$. Then for every function $F\in H^1(\mathcal G)$, there exist  sequences $\{\lambda_s^{(k)}\}\in \ell^1$ and  functions $G_s^{(k)}, \tilde{G}_{s}^{(k)}\in L^{\infty}_c(\mathcal G)$, the space of bounded functions with compact support, such that
\begin{align}\label{factorization}
F=\sum_{k=1}^\infty\sum_{s=1}^{\infty} \lambda_{s}^{(k)}\, { \Pi_{l}}(G_s^{(k)}, \tilde {G}_{s}^{(k)})
\end{align}
in the sense of $H^1(\mathcal G)$.
Moreover, we have that:
$$
\left\Vert F\right\Vert_{H^1(\mathcal G)} \approx \inf\left\{\sum_{k=1}^\infty \sum_{s=1}^{\infty} \left\vert \lambda_s^{(k)}\right\vert \left\Vert G_s^{(k)}\right\Vert_{L^{p}(\mathcal G)}\left\Vert \tilde{G}_{s}^{(k)}\right\Vert_{L^{p'}(\mathcal G)}  \right\},
$$
where the infimum  is taken over all possible representations of $F$ that satisfy \eqref{factorization} and $p'$ is the conjugate of $p$, i.e., $\frac{1}{p}+\frac{1}{p'}=1$.
\end{theorem}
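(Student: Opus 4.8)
The statement really packages two inequalities: that every admissible bilinear series lies in $H^1(\mathcal G)$ with norm $\lesssim\sum_{k,s}|\lambda_s^{(k)}|\,\|G_s^{(k)}\|_{L^p}\|\tilde G_s^{(k)}\|_{L^{p'}}$, and conversely that every $F\in H^1(\mathcal G)$ admits such a representation almost realising $\|F\|_{H^1}$. The first (easy) direction I would obtain by $H^1$--BMO duality. For $G,\tilde G\in L^\infty_c(\mathcal G)$ both $G\,R_l\tilde G$ and $\tilde G\,R_l^tG$ lie in $L^1(\mathcal G)$ (H\"older together with $L^p$-boundedness of $R_l,R_l^t$), and for $b\in{\rm BMO}(\mathcal G)$ one moves the Riesz transform onto its transpose:
\[
\langle \Pi_l(G,\tilde G),b\rangle=\langle \tilde G,\,R_l^t(bG)-b\,R_l^tG\rangle=-\langle \tilde G,[b,R_l^t]G\rangle .
\]
Since $R_l^t$ is again a Calder\'on--Zygmund operator, the commutator upper bound \eqref{upper bound} (valid for $R_l^t$) and H\"older give $|\langle\Pi_l(G,\tilde G),b\rangle|\lesssim\|b\|_{{\rm BMO}(\mathcal G)}\|G\|_{L^p}\|\tilde G\|_{L^{p'}}$, whence $\|\Pi_l(G,\tilde G)\|_{H^1}\lesssim\|G\|_{L^p}\|\tilde G\|_{L^{p'}}$ and the first inequality follows by summation.

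For the reverse inequality the plan is to reduce, through the atomic decomposition of Definition~\ref{def Hardy atom}, to a single $L^\infty$-atom $a$ on a ball $B=B(g_0,R)$ and to prove a \emph{one-step approximation lemma}: there are $C>0$ and $\theta\in(0,1)$, independent of $a$, and $G,\tilde G\in L^\infty_c(\mathcal G)$ with $\|G\|_{L^p}\|\tilde G\|_{L^{p'}}\le C$ and $\|a-\Pi_l(G,\tilde G)\|_{H^1}\le\theta\,\|a\|_{H^1}$. Granting this, a geometric iteration produces the double-indexed representation: decompose $F$ atomically (this is the index $s$), approximate each atom by one bilinear form, subtract to obtain a remainder $F^{(2)}$ with $\|F^{(2)}\|_{H^1}\le C'\theta\,\|F\|_{H^1}$, and repeat (this is the index $k$). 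Choosing $\varepsilon_o$ small enough that $C'\theta<1$, the remainders decay geometrically, so $F=\sum_{k,s}\lambda_s^{(k)}\Pi_l(G_s^{(k)},\tilde G_s^{(k)})$ in $H^1(\mathcal G)$ with $\sum_{k,s}|\lambda_s^{(k)}|\,\|G_s^{(k)}\|_{L^p}\|\tilde G_s^{(k)}\|_{L^{p'}}\lesssim\sum_k(C'\theta)^{k}\|F\|_{H^1}\lesssim\|F\|_{H^1}$.

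The construction inside the one-step lemma is where Theorem~\ref{thm0} (equivalently Proposition~\ref{lem lower bound 2}) is used. Applying it with centre $g_0$, radius $R/\varepsilon_o$ and parameter $\eta=\varepsilon_o$ produces a companion point $g_*$ with $\rho(g_0,g_*)=R/\varepsilon_o$ such that $R_l(\chi_{B(g_*,R)})$ is bounded below in modulus by $c\,\varepsilon_o^{Q}$, with a single sign, on the whole ball $B$. I would then set $\tilde G=\chi_{B(g_*,R)}$ and $G=a/R_l\tilde G-\kappa\,\phi$, where $\phi$ is a fixed mean-one bump on $B$ and $\kappa$ is chosen so that $\int_{\mathcal G}G=0$. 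With this choice $G\,R_l\tilde G=a-\kappa\,\phi\,R_l\tilde G$, so that
\[
a-\Pi_l(G,\tilde G)=\kappa\,\phi\,R_l\tilde G+\tilde G\,R_l^tG .
\]
The first error lives on $B$ and, since $R_l\tilde G$ is smooth and essentially the constant $\sim\varepsilon_o^{Q}$ there, has size $\lesssim\varepsilon_o R^{-Q}$; the second lives on the far ball and, crucially because $G$ has been made mean-zero, decays through the smoothness estimate in \eqref{MEHT} to size $\lesssim\varepsilon_o^{Q+1}R^{-Q}$. A bookkeeping of the two (small) means shows that, after the mean-zero adjustment forced by $H^1$ membership, both pieces have $H^1$ norm $\lesssim\varepsilon_o$, giving $\theta\approx\varepsilon_o<1$, while the normalisations yield $\|G\|_{L^p}\|\tilde G\|_{L^{p'}}\lesssim\varepsilon_o^{-Q}$.

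The hard part will be precisely this $H^1$ control of the error. Reproducing $a$ on the \emph{full} ball $B$ forces the auxiliary ball to sit at distance $R/\varepsilon_o$, where the kernel is only $\sim\varepsilon_o^{Q}R^{-Q}$; dividing by $R_l\tilde G\sim\varepsilon_o^{Q}$ inflates $G$ by $\varepsilon_o^{-Q}$, and one must check that this inflation is exactly compensated by the kernel decay so that the product $\|G\|_{L^p}\|\tilde G\|_{L^{p'}}$ stays bounded. The truly delicate point is the mean-zero correction $\kappa\phi$: without it the transpose term $\tilde G\,R_l^tG$ is a full-size atom on the far ball and no contraction occurs, whereas imposing $\int_{\mathcal G} G=0$ converts the first-order part of $R_l^tG$ into a genuine gain via the smoothness estimate \eqref{MEHT}. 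Verifying that the residual bump $\kappa\phi\,R_l\tilde G$ together with the dipole coming from the leftover masses is strictly smaller than $a$ in $H^1(\mathcal G)$ — that is, that $\theta<1$ can be made explicit — is the heart of the matter, and it is exactly here that the quantitative, sign-definite lower bound of Theorem~\ref{thm0} is indispensable.
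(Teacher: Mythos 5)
Your proposal is correct and follows essentially the same route as the paper: the easy direction by $H^1$--BMO duality and the commutator upper bound \eqref{upper bound}, and the hard direction by approximately factoring each atom as $\Pi_l(G,\tilde G)$ with $\tilde G=\chi_{B(g_*,\bar r)}$ placed via the kernel lower bound of Proposition \ref{lem lower bound 2}, followed by a geometric iteration. The only differences are cosmetic: the paper divides by the constant $R_l(\tilde G)(g)$ (the value at the centre) rather than by the function $R_l\tilde G$, which makes your mean-zero correction $\kappa\phi$ unnecessary, and the $H^1$ norm of the two-ball, mean-zero error is $\lesssim\eta\log\frac{1}{\eta}$ (Lemma \ref{lemma Hardy}) rather than $\lesssim\eta$, a logarithmic loss absorbed by choosing $\eta$ with $\eta\log\frac{1}{\eta}<\varepsilon$.
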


Now we consider the boundedness property of the bilinear form $\Pi_j(G,\tilde G)(g)$.
\begin{proposition}\label{prop H1 estimate of pi}
 For any fixed $\tilde G, G\in L^{\infty}_c(\mathcal G)$, we have that $\Pi_{j}(G,\tilde G)$ is in $H^1(\mathcal G)$. Moreover,
there exists a positive constant {$C$,  independent of $(\tilde G, G)$, }such that
\begin{align}\label{H1norm}
\|\Pi_{j}(G,\tilde G)\|_{H^1(\mathcal G)}\leq C\|\tilde G\|_{L^{p}(\mathcal G)}\|G\|_{L^{p'}(\mathcal G)}.
\end{align}
\end{proposition}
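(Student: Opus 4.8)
The plan is to combine the $H^1(\mathcal G)$--${\rm BMO}(\mathcal G)$ duality of Folland--Stein with the already-known upper bound \eqref{upper bound} for the commutator, the link between the two being a pairing identity that recasts $\Pi_j$ as a commutator acting on $\tilde G$ and $G$.

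First I would check that $\Pi_j(G,\tilde G)$ already lies in $L^1(\mathcal G)$ with the right quantitative control. Since $G,\tilde G\in L^\infty_c(\mathcal G)$ belong to every $L^q(\mathcal G)$, and since both $R_j$ and its transpose $R_j^t$ are Calder\'on--Zygmund operators and hence bounded on $L^p(\mathcal G)$ and on $L^{p'}(\mathcal G)$ for $1<p<\infty$, H\"older's inequality gives
\begin{align*}
\|\Pi_j(G,\tilde G)\|_{L^1(\mathcal G)} &\leq \|G\|_{L^{p'}(\mathcal G)}\,\|R_j\tilde G\|_{L^p(\mathcal G)}+\|\tilde G\|_{L^p(\mathcal G)}\,\|R_j^t G\|_{L^{p'}(\mathcal G)}\\
&\lesssim \|\tilde G\|_{L^p(\mathcal G)}\,\|G\|_{L^{p'}(\mathcal G)}.
\end{align*}
This already shows $\Pi_j(G,\tilde G)\in L^1(\mathcal G)$, so the integral pairing $\int_{\mathcal G}\Pi_j(G,\tilde G)\,b\,dg$ is absolutely convergent for every bounded $b$.

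The key algebraic step is the identity
\begin{align*}
\int_{\mathcal G}\Pi_j(G,\tilde G)(g)\,b(g)\,dg = \int_{\mathcal G}\big([b,R_j]\tilde G\big)(g)\,G(g)\,dg, \qquad b\in L^\infty_c(\mathcal G).
\end{align*}
To obtain it I would move the Riesz transform in the second term onto its transpose: starting from $\int b\,G\,R_j\tilde G-\int b\,\tilde G\,R_j^t G$ and using $\int (b\tilde G)\,R_j^t G=\int R_j(b\tilde G)\,G$, the two terms reassemble into $\int G\,\big(b\,R_j\tilde G-R_j(b\tilde G)\big)$, which is exactly the right-hand side. Because $b,G,\tilde G\in L^\infty_c(\mathcal G)$, every integrand is a product of an $L^q$ with an $L^{q'}$ function, so this transposition is legitimate. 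Combining the identity with \eqref{upper bound} and H\"older then yields, for all $b\in L^\infty_c(\mathcal G)$,
\begin{align*}
\left|\int_{\mathcal G}\Pi_j(G,\tilde G)\,b\,dg\right| \leq \|[b,R_j]\tilde G\|_{L^p(\mathcal G)}\,\|G\|_{L^{p'}(\mathcal G)} \leq C\,\|b\|_{{\rm BMO}(\mathcal G)}\,\|\tilde G\|_{L^p(\mathcal G)}\,\|G\|_{L^{p'}(\mathcal G)}.
\end{align*}

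It remains to upgrade this uniform pairing bound to the statement $\Pi_j(G,\tilde G)\in H^1(\mathcal G)$ with norm \eqref{H1norm}. Here I would invoke the duality $\big(H^1(\mathcal G)\big)^*={\rm BMO}(\mathcal G)$: since $\Pi_j(G,\tilde G)$ is a genuine $L^1(\mathcal G)$ function whose integral pairings against the norming, dense subclass $L^\infty_c(\mathcal G)\subset{\rm BMO}(\mathcal G)$ are dominated by $C\|\tilde G\|_{L^p(\mathcal G)}\|G\|_{L^{p'}(\mathcal G)}\,\|b\|_{{\rm BMO}(\mathcal G)}$, the function $\Pi_j(G,\tilde G)$ belongs to $H^1(\mathcal G)$ with the asserted bound. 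I expect this last passage to be the main obstacle and the place requiring real care: a bounded functional on ${\rm BMO}(\mathcal G)$ need not arise from $H^1(\mathcal G)$, since the dual of ${\rm BMO}$ is strictly larger than $H^1$. What rescues us is precisely that $\Pi_j(G,\tilde G)$ is known a priori to be in $L^1(\mathcal G)$ and that testing against the weak-$*$ dense, norming class $L^\infty_c(\mathcal G)$ suffices to recover the $H^1$ norm; making this rigorous means appealing to the Hardy-space duality theory on spaces of homogeneous type (together with the automatic cancellation $\int_{\mathcal G}\Pi_j(G,\tilde G)\,dg=0$, forced by pairing against constants) rather than to formal duality alone.
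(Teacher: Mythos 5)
Your proposal follows the same overall route as the paper: the $L^1$ bound via H\"older and the $L^p$-boundedness of $R_j$, the pairing identity $\langle b,\Pi_j(G,\tilde G)\rangle=\langle G,[b,R_j]\tilde G\rangle$, the commutator upper bound \eqref{upper bound}, and duality to recover the $H^1$ norm. The one genuine gap is in the final step, and you have correctly located it yourself: the duality formula $\|f\|_{H^1(\mathcal G)}\approx\sup_{\|b\|_{{\rm BMO}(\mathcal G)}\le 1}|\langle b,f\rangle|$ is valid only for $f$ \emph{already known} to lie in $H^1(\mathcal G)$, and you attempt to extract membership in $H^1$ from the uniform pairing bound itself. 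As you note, a bounded functional on ${\rm BMO}$ need not come from $H^1$; making your rescue argument rigorous would require the Coifman--Weiss duality $({\rm VMO})^*=H^1$ on spaces of homogeneous type, a density and norming argument for your testing class inside ${\rm VMO}$ (note $L^\infty_c\not\subset{\rm VMO}$, so one must pass to continuous compactly supported functions), and an identification of the resulting $H^1$ element with the given $L^1$ function. None of this is carried out in the proposal; it is only gestured at, and it is considerably heavier machinery than the problem requires.

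The paper closes this gap with a one-line observation you missed: since $G,\tilde G\in L^{\infty}_c(\mathcal G)$ and $R_j$, $R_j^t$ are bounded on $L^2(\mathcal G)$, the function $\Pi_j(G,\tilde G)$ lies in $L^2(\mathcal G)$, is compactly supported (each of its two terms is supported where $G$, respectively $\tilde G$, is supported), and has vanishing integral, as you already noted. Hence it is a constant multiple of an $L^2$-atom and therefore belongs to $H^1(\mathcal G)$ a priori. With membership secured, the standard duality fact (\cite[Exercise 1.4.12 (b)]{Gra} in the paper's citation) applies directly and your pairing estimate yields \eqref{H1norm}. Inserting this observation repairs the argument; the rest of your proof then goes through essentially as written.
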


\begin{proof}
For any fixed $\tilde G, G\in L^{\infty}_c(\mathcal G)$,
to show $\Pi_{j}(G,\tilde G)$ is in $H^1(\mathcal G)$ with the required norm \eqref{H1norm}, we now consider the properties of size estimate, cancellation and compact support for
$\Pi_{j}(G,\tilde G)$.

To begin with,
since $\tilde G, G\in L^{\infty}_c(\mathcal G)$,
we have that $\tilde G\in L^{p'}(\mathcal G)$ and $G\in L^{p}(\mathcal G)$ for any $p,p'\in (1, \infty)$ with $1 ={1\over p} +{1\over p'}$.
Then, from the definition of $\Pi_{j}$ as in \eqref{def of pi}, the {$L^{p'}$-}boundedness of the Riesz transform $R_j$ and H\"older's inequality, we have that $\Pi_{j}(G,\tilde G) \in L^1(\mathcal G)$ with the estimate $\left\Vert \Pi_j(G,\tilde{G})\right\Vert_{L^1(\mathcal{G})}\leq C \|\tilde G\|_{L^{p}(\mathcal G)}\|G\|_{L^{p'}(\mathcal G)}$.

Moreover, note that from the definition of $\Pi_{l}$ as in \eqref{def of pi}, we have
$$\int_{\mathcal G} \Pi_{j}(G,\tilde G)(g)\,dg=0.$$
Next,  since $\tilde G, G\in L^{\infty}_c(\mathcal G)$, from the definition of $\Pi_{j}$ as in \eqref{def of pi} and the {$L^2$-}boundedness of $R_j$, it is direct to see that
$\Pi_{j}(G,\tilde G)$ is in $L^2(\mathcal G)$ with compact support. Hence, we immediately have that
$\Pi_{j}(G,\tilde G)$ is a multiple of an $H^1(\mathcal G)$ atom, i.e, we get that
$\Pi_{j}(G,\tilde G)$ is in $H^1(\mathcal G)$.  Then it suffices to verify that the $H^1(\mathcal G)$ norm of $\Pi_{j}(G,\tilde G)$
satisfies \eqref{H1norm}.

To see this, for $b\in {\rm BMO}(\mathcal G)$, we now consider the inner product
\begin{align}\label{inner}
\langle b, \Pi_{j}(G,\tilde G)\rangle:=\int_{\mathcal G} b(g)\Pi_{j}(G,\tilde G)(g)\,dg.
\end{align}
It is clear that this inner product is well-defined since $\Pi_{j}(G,\tilde G)(g)$ is in $L^2(\mathcal G)$ with compact support and  $b\in {\rm BMO}(\mathcal G)$ and hence in $L^2_{loc}(\mathcal G)$.

We now  claim that for any fixed $\tilde G, G\in L^{\infty}_c(\mathcal G)$,
\begin{align}\label{exchange}
\langle b, \Pi_{j}(G,\tilde G)\rangle= \langle G, [b, R_j](\tilde G)\rangle
\end{align}
In fact, since $\tilde G, G$ are in $L^{\infty}_c(\mathcal G)$ and
$b$ is in $L^2_{loc}(\mathcal G)$, we have
$\langle b,  G\, R_j(\tilde G)\rangle
=\langle G,  b\, R_j(\tilde G)\rangle.
$
Moreover,
 we also have
$\langle b,  \tilde  G\, R_{j}^t(G)\rangle
=\langle G, R_j(b\tilde  G)\rangle.
$
Combining these two equalities and the definition of $\Pi_{j}$ as in \eqref{def of pi}, we have
\begin{align*}
\langle b, \Pi_{j}(G, \tilde G)\rangle
&=\langle b,   G\, R_j (\tilde G)-\tilde G\, R_j^t( G)\rangle= \langle G,\, b\, R_j(\tilde G)-   R_j(b\tilde  G)\rangle \\
&= \langle G, [b, R_j](\tilde G)\rangle,
\end{align*}
which implies \eqref{exchange}.

Now from the equality \eqref{exchange} and the upper bound of the commutator as in \eqref{upper bound}, we obtain that
\begin{align}\label{PiNorm}
\big|\langle b, \Pi_{j}(G,\tilde  G)\rangle\big|= \big|\langle G, [b, R_j](\tilde G)\rangle\big|\leq C \|b\|_{ {\rm BMO}(\mathcal G)}  \|\tilde G\|_{L^{p}(\mathcal G)}\|G\|_{L^{p'}(\mathcal G)}.
\end{align}

We then verify  \eqref{H1norm}.
To see this, we point out that from the fundamental fact as in \cite[Exercise 1.4.12 (b)]{Gra}, we have
$$ \|\Pi_{j}(G, \tilde G)\|_{ H^1(\mathcal G)} \approx \sup_{b:\ \|b\|_{ {\rm BMO}(\mathcal G)}\leq1  }
\big| \langle  b, \Pi_{j}(G, \tilde G)\rangle \big|, $$
which, together with \eqref{PiNorm}, immediately implies that
\eqref{H1norm} holds.  The proof of Proposition \ref{prop H1 estimate of pi} is complete.
\end{proof}

Next, we need the following  technical lemma about certain $H^1(\mathcal G)$ functions.
\begin{lemma}\label{lemma Hardy}
Let $\varepsilon_0$ be the constant as in Lemma \ref{lem lower bound 2} satisfying $0 < \varepsilon_o \ll 1$, and let $r>0$, $0<\eta\leq \varepsilon_o$.
Suppose $F$ is a function defined on $\mathcal G$ satisfying:  $\int_{\mathcal G} F(\tilde{g})\,d\tilde{g}=0$, and $|F(\tilde{g})|\leq \chi_{B(g,\eta r)}(\tilde{g})+\chi_{B(g_*,\eta r)}(\tilde{g})$, where {$\rho(g,g_*)=r$.} Then we have
\begin{align}\label{lemma Hardy e1}
{\| F \|_{H^1(\mathcal G)}} {\lesssim} \eta^Q r^Q\log\frac{1}{\eta} .
\end{align}
\end{lemma}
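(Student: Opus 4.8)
The plan is to decompose $F$ into a controllable number of multiples of $H^1(\mathcal G)$ atoms and to sum the corresponding coefficients. Write $B_1 = B(g,\eta r)$ and $B_2 = B(g_*,\eta r)$; since $0<\eta\le\varepsilon_o\ll1$ and $\rho(g,g_*)=r$, the dilation structure and the normalisation $|B(o,1)|=1$ give $|B_1|=|B_2|=(\eta r)^Q$, and the two balls are disjoint and separated at scale $r$. Set $F_1=F\chi_{B_1}$, $F_2=F\chi_{B_2}$ and $m_i=\int F_i$; since $\int F=0$ we have $m_2=-m_1$ and $|m_1|\le\int_{B_1}|F|\le|B_1|=(\eta r)^Q$. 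I would then write
\[
F = \Big(F_1-\tfrac{m_1}{|B_1|}\chi_{B_1}\Big)+\Big(F_2-\tfrac{m_2}{|B_2|}\chi_{B_2}\Big)+ \tfrac{m_1}{|B_1|}\big(\chi_{B_1}-\chi_{B_2}\big).
\]
The first two terms are mean-zero functions supported on a single ball of radius $\eta r$ with sup norm at most $2$; each is therefore at most $2(\eta r)^Q$ times an $L^\infty$-atom and contributes $\lesssim(\eta r)^Q$ to $\|F\|_{H^1(\mathcal G)}$, with no logarithm. Writing $c=m_1/|B_1|$ (so $|c|\le1$), everything reduces to the dipole estimate $\|\chi_{B_1}-\chi_{B_2}\|_{H^1(\mathcal G)}\lesssim (\eta r)^Q\log\frac1\eta$, which is the heart of the lemma.

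For the dipole I would use a telescoping chain of balls at geometrically increasing scales. Fix a ball $B^\sharp=B(g,Kr)$ with $K=K(C)$ large enough (relative to the quasi-metric constant $C$ in \eqref{qdr}) that $B_1\cup B_2\subseteq B^\sharp$, and let $N$ be the least integer with $2^N\eta r\ge Kr$, so $N\lesssim\log\frac1\eta$ (the additive $O(1)$ being absorbed since $\eta\le\varepsilon_o$ forces $\log\frac1\eta\gtrsim1$). As $|B_1|=|B_2|$, the two copies of $B^\sharp$ cancel and
\[
\chi_{B_1}-\chi_{B_2}=\Big(\chi_{B_1}-\tfrac{|B_1|}{|B^\sharp|}\chi_{B^\sharp}\Big)-\Big(\chi_{B_2}-\tfrac{|B_2|}{|B^\sharp|}\chi_{B^\sharp}\Big).
\]
I would telescope the first bracket along the concentric chain $D_k=B(g,2^k\eta r)$, $0\le k\le N$ (with $D_0=B_1$, $D_N=B^\sharp$), writing it as $\sum_{k=0}^{N-1}|B_1|\big(|D_k|^{-1}\chi_{D_k}-|D_{k+1}|^{-1}\chi_{D_{k+1}}\big)$; each summand is mean zero, supported on $D_{k+1}$, and by the scaling $|D_{k+1}|/|D_k|=2^Q$ it is $\lesssim(\eta r)^Q$ times an atom. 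The second bracket is handled identically with a concentric chain $E_k=B(g_*,2^k\eta r)$ around $g_*$, terminated by one final inclusion $E_{M-1}\subseteq B^\sharp$ in which the two balls have comparable measures $\approx r^Q$; again every step yields an atom with coefficient $\lesssim(\eta r)^Q$. Summing the $N+M\lesssim\log\frac1\eta$ atoms gives the dipole bound, and combining with the two corrections establishes \eqref{lemma Hardy e1}.

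The genuinely nontrivial point, and the main obstacle, is the appearance of the factor $\log\frac1\eta$: one cannot move mass from $B_1$ to $B_2$ cheaply through same-scale atoms (that would cost $\sim\eta^{-1}$ atoms and give no logarithm), and the logarithmic cost is precisely the number of dyadic scales $\sim\log\!\big(r/(\eta r)\big)$ separating the common radius $r$ from the radius $\eta r$ of the two bumps. Carrying this out cleanly relies on the dilation identity $|B(g,R)|=R^Q$, so that consecutive balls in a doubling chain have bounded measure ratio $2^Q$, together with care over the Coifman--Weiss quasi-triangle constant $C$ when arranging the nestings $B_1\cup B_2\subseteq B^\sharp$ and $E_{M-1}\subseteq B^\sharp$. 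Because $B^\sharp$ is centred at $g$ while the second chain is centred at $g_*$, one must allow the final off-centre step between two comparable balls, which is harmless exactly because their measures are comparable. All remaining estimates (the sup-norm bounds on each telescoped difference and the counting of scales) are then routine.
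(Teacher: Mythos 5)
Your argument is correct, but it follows a genuinely different route from the paper. The paper invokes the Christ--Geller characterisation (Theorem \ref{thm Hardy Riesz}) and bounds $\|F\|_{L^1}+\sum_{j}\|R_jF\|_{L^1}$ directly: the contributions near $B(g,c_*\eta r)$ and $B(g_*,c_*\eta r)$ are handled by $L^2$-boundedness of $R_j$ and Cauchy--Schwarz, the region beyond distance $\approx r$ by the mean-zero condition together with the smoothness estimate in \eqref{MEHT}, and the intermediate annulus $c_*\eta r\lesssim\rho(g,\tilde g)\lesssim r$ by the size bound $|K_j|\lesssim\rho^{-Q}$ alone, whose integral over that annulus produces the $\log\frac1\eta$. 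You instead work purely with the atomic definition, peeling off two genuine atoms supported on $B_1$ and $B_2$ and reducing to the dipole $\chi_{B_1}-\chi_{B_2}$, which you resolve by the standard telescoping chain of concentric dyadically expanding balls; the $\log\frac1\eta$ is then the count of scales between $\eta r$ and $r$. The two proofs locate the logarithm in the same place (the dyadic scales separating the bump radius from the separation distance), but yours is more elementary and more general: it uses only the atomic decomposition, the scaling identity $|B(g,R)|=R^Q$, and the quasi-triangle inequality \eqref{qdr} (the latter both to get disjointness of $B_1,B_2$ for $\eta\le\varepsilon_o\ll1$, which you need for $F=F_1+F_2$ and $|m_1|\le|B_1|$, and to arrange the inclusions into $B^\sharp$), so it would carry over verbatim to any space of homogeneous type with no reference to Riesz transforms or kernel estimates. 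The paper's proof, by contrast, reuses machinery already set up in Section 4 and outputs directly the $L^1$ bounds on $R_jF$ in the norm they actually employ. All the steps you flag as routine (the sup-norm bound $|B_1|(|D_k|^{-1}+|D_{k+1}|^{-1})\le(2^Q+1)|D_{k+1}|^{-1}|B_1|$ on each telescoped difference, the final off-centre step between two balls of comparable measure $\approx r^Q$, and $N+M\lesssim\log\frac1\eta$ since $\log\frac1\eta\gtrsim1$) do check out.
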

\begin{proof}

Let $F$ satisfy $\int_{\mathcal G} F(\tilde{g})\,d\tilde{g}=0$, and $|F(\tilde{g})|\leq \chi_{B(g,\eta r)}(\tilde{g})+\chi_{B(g_*,\eta r)}(\tilde{g})$ with {$\rho(g,g_*)=r$}. Obviously we have $\|F\|_{L^1(\mathcal G)} {\lesssim} \eta^Q r^Q$.
To show \eqref{lemma Hardy e1}, by Theorem \ref{thm Hardy Riesz}, we will need to estimate the
$L^1(\mathcal G)$ norm of $F$ and $R_j F$, $j=1,\ldots,n$.

Set in the sequel,
$c_* = 4 \, \left( C + C_2 \right),$
where $C, C_2 \geq 1$ come from \eqref{qdr} and \eqref{equi metric d} respectively. Without loss of generality, we may assume that $\varepsilon_0$ small enough such that $4 \, c_*^2 \, \varepsilon_0 \leq 1$.

Next, for each Riesz transform $R_j$, we have
\begin{align*}
\|R_j F\|_{L^1(\mathcal G)} &= \int_{ B(g, c_* \eta r) }|R_j F(\tilde g)| \,d\tilde g+\int_{B(g_*, c_* \eta r)}|R_j F(\tilde g)| \,d\tilde g \\
&\quad+ \int_{\mathcal G\backslash\big( B(g, c_* \eta r) \cup B(g_*, c_* \eta r)\big)}|R_j F(\tilde g)| \,d\tilde g\\
&= I+II+III.
\end{align*}
For the terms $I$ and $II$, by using the $L^2(\mathcal G)$ boundedness of the Riesz  transform $R_j$, we get that
\begin{align*}
I+II \lesssim  |B(g, c_* \eta r)|^{1\over 2}  \|F\|_{L^2(\mathcal G)} +|B(g_*, c_* \eta r)|^{1\over 2}  \|F\|_{L^2(\mathcal G)} \lesssim \eta^Q r^Q.
\end{align*}
For the term $III$, from the cancellation condition of $F$, we get that
\begin{align*}
III &= \int_{\mathcal G\backslash\big( B(g, c_* \eta r) \cup B(g_*, c_* \eta r)\big)} \bigg| \int_{\mathcal G}\Big[K_j(\tilde g,\tilde{\tilde g})-K_j(\tilde g,g)\Big] F(\tilde{\tilde g})\, d\tilde{\tilde g}\bigg| \,d\tilde g\\
&\leq \int_{\mathcal G\backslash\big( B(g, c_* \eta r) \cup B(g_*, c_* \eta r)\big)}  \int_{B(g,\eta r)}\Big|K_j(\tilde g,\tilde{\tilde g})-K_j(\tilde g,g)\Big| \, d\tilde{\tilde g} \,d\tilde g\\
&\quad+ \int_{\mathcal G\backslash\big( B(g, c_* \eta r) \cup B(g_*, c_* \eta r)\big)}  \int_{B(g_*,\eta r)}\Big|K_j(\tilde g,\tilde{\tilde g})-K_j(\tilde g,g)\Big| \, d\tilde{\tilde g} \,d\tilde g\\
&=: III_1 +III_2.
\end{align*}
For the term $III_1$, we further have
that
\begin{align*}
III_1
&\leq \int_{\big( B(g, 4 C_2 r) \big)^c}  \int_{B(g,\eta r)}\Big|K_j(\tilde g,\tilde{\tilde g})-K_j(\tilde g,g)\Big| \, d\tilde{\tilde g} \,d\tilde g\\
&\quad+ \int_{B(g, 4 C_2 r) \backslash B(g, c_* \eta r)}  \int_{B(g,\eta r)}\Big|K_j(\tilde g,\tilde{\tilde g})-K_j(\tilde g,g)\Big| \, d\tilde{\tilde g} \,d\tilde g\\
&=: III_{11}+III_{12},
\end{align*}
where the constant $C_2 \geq 1$ comes from \eqref{equi metric d}.

Using \eqref{MEHT} and \eqref{equi metric d}, it is direct that
\begin{align*}
III_{11}
&\lesssim \int_{\big( B(g,4 C_2  r) \big)^c}  \int_{B(g,\eta r)} {d(g,\tilde{\tilde g}) \over d(g,\tilde g)^{Q+1}} \, d\tilde{\tilde g} \,d\tilde g \lesssim \int_{\big( B(g,4 C_2 r) \big)^c} {(\eta r)^{Q + 1} \over \rho(g,\tilde g)^{Q+1}} \,d\tilde g\\
& \lesssim \eta^{Q + 1} r^Q,
\end{align*}
where the last inequality follows from the decomposition of the set  $\big( B(g, 4 C_2 r) \big)^c$ into annuli $\cup_{j\geq 2} B(g, 2^{j+1} C_2 r) \backslash B(g, 2^j C_2 r)$.

For $III_{12}$, by \eqref{MEHT} and \eqref{equi metric d}, we have
\begin{align*}
III_{12} &\leq \int_{B(g,4 C_2  r) \backslash  B(g, c_* \eta r) }  \int_{B(g,\eta r)}\Big(\Big|K_j(\tilde g,\tilde{\tilde g})\Big|+\Big|K_j(\tilde g,g)\Big|\Big) \, d\tilde{\tilde g} \,d\tilde g\\
&\lesssim \int_{B(g,4 C_2  r) \backslash  B(g, c_* \eta r) }  \int_{B(g,\eta r)}\Big( {1\over \rho(\tilde g,\tilde{\tilde g})^Q} +{1\over \rho(\tilde g, g)^Q} \Big) \, d\tilde{\tilde g} \,d\tilde g\\
&\lesssim \int_{B(g,4 C_2  r) \backslash  B(g, c_* \eta r) }  \int_{B(g,\eta r)} {1\over \rho(g, \tilde g)^Q} \, d\tilde{\tilde g} \,d\tilde g\\
&\lesssim \eta^Q r^Q\log{1\over \eta}.
\end{align*}

Symmetrically we can obtain the estimate for $III_2$, which is also bounded by $C\eta^Q r^Q + C\eta^Q r^Q\log{1\over \eta}$.
Combining all the estimates above, and by noting that $1\ll \log{1\over \eta} $, we obtain that
$${\| F \|_{H^1(\mathcal G)}} \leq C \eta^Q r^Q\log\frac{1}{\eta} ,
$$
which implies that \eqref{lemma Hardy e1} holds.
\end{proof}

Suppose $1\leq j\leq n$.  Ideally, given an $H^1(\mathcal G)$-atom $a$ (as in Definition \ref{def Hardy atom}), we would like to find functions $\tilde{G}\in L^{p'}(\mathcal G)$, $G\in L^{p}(\mathcal G)$ such that $\Pi_j(\tilde G, G)=a$ pointwise.  While we are not able to do this in general, our construction in  the next proposition is close to that aim.

\begin{proposition}
\label{prop:ApproxFactorization}
Suppose $a(g)$ is an arbitrary  $H^1(\mathcal G)$-atom as in Definition \ref{def Hardy atom}.
Suppose $1\leq j\leq n$ and $1<p,p'<\infty$  with
$ {1\over p} +  {1\over p'} = 1$.  For every $\varepsilon>0$,
there exist $\tilde G, G\in L^{\infty}_c(\mathcal G)$
and a positive number $\eta$ depending only on $\varepsilon$
such that
$$
\left\Vert a-\Pi_{j}(G,\tilde G)\right\Vert_{H^1(\mathcal G)}<\varepsilon
$$
and that $\big\Vert \tilde G\big\Vert_{L^{p}(\mathcal G)}\left\Vert G\right\Vert_{L^{p'}(\mathcal G)}\leq C \eta^{-Q}$, where $C$ is an absolute positive constant.
\end{proposition}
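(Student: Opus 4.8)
The plan is to construct $G$ and $\tilde G$ so that $\Pi_j(G,\tilde G)$ reproduces $a$ \emph{exactly} on the ball supporting $a$ and leaves only a small, highly cancelling error concentrated on a far-away ball. We may take $a$ to be an $L^\infty$-atom supported on $B=B(g_0,s)$ with $\|a\|_{L^\infty}\le|B|^{-1}$ (the $L^2$-atom case is analogous). Given $\varepsilon$, fix a small parameter $\eta\le\varepsilon_o$ to be chosen at the very end and set $r:=s/\eta$, so that $B=B(g_0,\eta r)$ plays the role of the near ball in Proposition \ref{lem lower bound 2}. Applying that proposition with $g=g_0$ produces $g_*=g_*(j,g_0,r)$ with $\rho(g_0,g_*)=r$ for which \eqref{lower bound 2 e1} holds. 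I would then set $\tilde G:=\chi_{B(g_*,\eta r)}$ and, on $B$, $G:=a/R_j(\tilde G)$, extended by $0$ elsewhere; since $R_j(\tilde G)(g_1)=\int_{B(g_*,\eta r)}K_j(g_1,g_2)\,dg_2$ has constant sign and modulus $\ge C\eta^Q$ for $g_1\in B$ by \eqref{lower bound 2 e1}, the function $G$ is a well-defined element of $L^\infty_c(\mathcal G)$.

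Next I would verify the clean splitting. Since $r=s/\eta\gg\eta r$ for $\eta$ small, the balls $B$ and $B(g_*,\eta r)$ are disjoint, and evaluating \eqref{def of pi} region by region gives $\Pi_j(G,\tilde G)=a\,\chi_B-R_j^t(G)\,\chi_{B(g_*,\eta r)}$: on $B$ one has $\tilde G=0$ and $G\,R_j(\tilde G)=a$; on $B(g_*,\eta r)$ one has $G=0$ and $\tilde G\,R_j^t(G)=R_j^t(G)$; and elsewhere both factors vanish. Hence the error $E:=\Pi_j(G,\tilde G)-a=-R_j^t(G)\,\chi_{B(g_*,\eta r)}$ is supported on the far ball. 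Moreover $E$ has mean zero: $\int\Pi_j(G,\tilde G)=0$ (as in the proof of Proposition \ref{prop H1 estimate of pi}, from $\langle R_j\tilde G,G\rangle=\langle\tilde G,R_j^tG\rangle$) and $\int a=0$, which also forces $\int_{B(g_*,\eta r)}R_j^t(G)=0$, i.e. the average of $R_j^t(G)$ over the far ball vanishes.

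For the norm bound I would simply compute, using $|B|=(\eta r)^Q$, $\|a\|_{L^\infty}\le|B|^{-1}$, $|R_j(\tilde G)|\ge C\eta^Q$ on $B$, and $\tfrac1p+\tfrac1{p'}=1$: one has $\|\tilde G\|_{L^p}=(\eta r)^{Q/p}$ while $\|G\|_{L^{p'}}\le C\eta^{-Q}(\eta r)^{-Q}(\eta r)^{Q/p'}$, whence $\|\tilde G\|_{L^p}\|G\|_{L^{p'}}\le C\eta^{-Q}$, as claimed.

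The heart of the argument, and the step I expect to be the main obstacle, is showing $\|E\|_{H^1(\mathcal G)}<\varepsilon$, which requires gaining a factor of $\eta$ beyond the trivial size estimate. A direct bound only gives $\|R_j^t(G)\|_{L^\infty(B(g_*,\eta r))}\lesssim r^{-Q}\|G\|_{L^1}\lesssim\eta^{-Q}r^{-Q}$, leading to an error of size $O(1)$, comparable to $a$ itself. The gain comes from combining the kernel smoothness \eqref{MEHT} with the cancellation just established: for $g,g'\in B(g_*,\eta r)$ and $\xi\in B$ one has $d(\xi,g)\approx r$, so $|K_j(\xi,g)-K_j(\xi,g')|\lesssim\eta r\cdot r^{-Q-1}$, giving oscillation of $R_j^t(G)$ over the far ball bounded by $\eta r^{-Q}\|G\|_{L^1}\lesssim\eta^{1-Q}r^{-Q}$; since the average of $R_j^t(G)$ over that ball vanishes, this upgrades to $\|R_j^t(G)\|_{L^\infty(B(g_*,\eta r))}\lesssim\eta^{1-Q}r^{-Q}$. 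Consequently a constant multiple of $E/(\eta^{1-Q}r^{-Q})$ satisfies the hypotheses of Lemma \ref{lemma Hardy} with the pair $(g_0,g_*)$, so $\|E\|_{H^1(\mathcal G)}\lesssim\eta^{1-Q}r^{-Q}\cdot\eta^Qr^Q\log\tfrac1\eta=\eta\log\tfrac1\eta$. Choosing $\eta$ small enough, depending only on $\varepsilon$, that $C\eta\log\tfrac1\eta<\varepsilon$ then completes the proof.
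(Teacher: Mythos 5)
Your construction is correct, and it shares the overall skeleton of the paper's proof --- the far ball $B(g_*,\eta r)$ supplied by Proposition \ref{lem lower bound 2}, the choice $\tilde G=\chi_{B(g_*,\eta r)}$, a function $G$ proportional to $a$ on the near ball, the smoothness estimate \eqref{MEHT}, and a final appeal to Lemma \ref{lemma Hardy} with $\eta\log\frac{1}{\eta}<\varepsilon$ --- but your error analysis is genuinely different. The paper normalises by the \emph{constant} $R_j(\tilde G)(g)$ evaluated at the centre of the atom, so its $G$ is a constant multiple of $a$ (in particular mean zero), and the error splits into two pieces: $W_1$ on the near ball, coming from the oscillation of $R_j(\tilde G)$ over $B(g,\eta r)$, and $W_2$ on the far ball, estimated by exploiting the cancellation of the atom inside $R_j^t(a)$. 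You instead divide by $R_j(\tilde G)(\tilde g)$ at the variable point, so $\Pi_j(G,\tilde G)$ reproduces $a$ exactly on the near ball and the entire error $-R_j^t(G)\chi_{B(g_*,\eta r)}$ sits on the far ball; the price is that your $G$ is no longer mean zero, so the atom's cancellation is unavailable, and you correctly recover the crucial gain of $\eta$ from the identity $\int_{\mathcal G}\Pi_j(G,\tilde G)\,dg=0$, which forces $R_j^t(G)$ to have vanishing average over $B(g_*,\eta r)$, combined with the oscillation bound $|K_j(\xi,g)-K_j(\xi,g')|\lesssim \eta r\cdot r^{-Q-1}$ from \eqref{MEHT} and \eqref{equi metric d}. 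A small bonus of your route: the error is a single mean-zero bump on one ball with sup-norm $\lesssim\eta^{1-Q}r^{-Q}$, hence an $\eta$-multiple of a single $H^1(\mathcal G)$-atom, so $\left\Vert a-\Pi_j(G,\tilde G)\right\Vert_{H^1(\mathcal G)}\lesssim\eta$ holds directly and the logarithmic loss in Lemma \ref{lemma Hardy} is not actually needed; the product bound $\big\Vert\tilde G\big\Vert_{L^p(\mathcal G)}\left\Vert G\right\Vert_{L^{p'}(\mathcal G)}\lesssim\eta^{-Q}$ comes out the same in both arguments.
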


\begin{proof}
Let $a(\tilde g)$ be an $H^1(\mathcal G)$-atom, supported in $B(g,\bar r)\subset \mathcal G$, satisfying that
$$ \int_{\mathcal G}a(\tilde g)d\tilde g=0 \quad {\rm and} \quad \|a\|_{L^\infty(\mathcal G)} \leq {|B(g,\bar r)|^{-1} = |B(o,\bar r)|^{-1} = \bar r^{-Q}.}$$

Note that from {Proposition} \ref{lem lower bound 2},
there exist constants $0 < \varepsilon_o \ll 1$  and {$C > 0$}, such that for all $0<\eta\leq\varepsilon_o$, $1 \leq j \leq n$, $g \in \mathcal G$ and $r > 0$, we can find $g_* = g_*(j, r) \in \mathcal G$ satisfying $\rho(g, g_*) = r$,
\begin{align}\label{e1 proof thm1}
\bigg|\int_{B(g_*, \eta r)} K_j(g_1, g_2) dg_2\bigg| \geq {C \eta^{ Q}, \quad \forall g_1 \in B(g, \eta r).}
\end{align}

Now we fix $g$, which is the centre of the ball $B(g,\bar r)$ above.
Next, fix $1\leq j\leq n$ and fix $\varepsilon>0$.  Choose $\eta$  sufficiently small so that $0<\eta<\varepsilon_o$
and that
 $$  \eta \log \frac{1}{\eta} <\varepsilon.$$

Then we set $r=\bar r \cdot \eta^{-1}$. Now based on the argument above, there exits $g_*\in \mathcal G$ satisfying {$\rho(g, g_*) = r$} and
\eqref{e1 proof thm1}.

We now define $\tilde G(\tilde g)=\chi_{ B(g_*, \eta r) }(\tilde g)$ and $ G(\tilde g) =  {\displaystyle a(\tilde g)\over R_j(\tilde G)( g)}$, $\tilde g\in \mathcal G$.

From the definitions of the functions $G$ and $\tilde G$, we obtain that {$\operatorname{Supp}\,G \subseteq \overline {B(g,\eta r)}$ and $\operatorname{Supp}\,\tilde G=\overline{B(g_*,\eta r)}$}. Moreover, it is direct to see that
$\|\tilde G\|_{L^{p}(\mathcal G)} \approx (\eta r)^{\frac{Q}{p}} $ and that
$$\|G\|_{L^{p'}(\mathcal G)} =  \frac{1}{|R_j(\tilde G)( g)|} \|a\|_{L^{p'}(\mathcal G)} {\lesssim} \eta^{-Q} (\eta r)^{-Q} (\eta r)^{{Q\over p'}}. $$
Hence, we obtain that
\begin{align*}
\|\tilde G\|_{L^{p}(\mathcal G)}\|G\|_{L^{p'}(\mathcal G)}  &\leq C (\eta r)^{\frac{Q}{p}}\  \eta^{-Q} (\eta r)^{-Q} (\eta r)^{{Q\over p'}}
=  C\eta^{-Q} .
\end{align*}
Next, we have
\begin{align*}
a(\tilde g)-\Pi_{j}( G, \tilde G)(\tilde g)
&=a(\tilde g)- \Big( G(\tilde g) R_j(\tilde  G)(\tilde g) - \tilde G(\tilde g) R_j^t( G)(\tilde g) \Big)\\
&=\bigg( a(\tilde g) -   {\displaystyle a(\tilde g)\over R_j(\tilde G)( g)}\, R_j(\tilde G)(\tilde g) \bigg) +{ \tilde G(\tilde g) R_j^t(a)(\tilde g) \over R_j(\tilde G)(g) } \\
&=: W_1(\tilde g)+W_2(\tilde g).
\end{align*}

By definition, it is obvious that $W_1(\tilde g)$ is supported on $B(g,\eta r)$ and $W_2(\tilde g)$ is supported on $B(g_*,\eta r)$.  We first estimate $W_1$.  For $\tilde g\in B(g,\eta r)$, {it follow from \eqref{e1 proof thm1}, \eqref{MEHT} and \eqref{equi metric d} that}
\begin{align*}
|W_1(\tilde g)|
 &= |a(\tilde g)|\frac{|R_j(\tilde G)( g)-R_j(\tilde G)(\tilde g)|}
{|R_j(\tilde G)( g)|}\\
&\lesssim  { \|a\|_{L^\infty(\mathcal G)} \over \eta^{Q}} \int_{B(g_*,\eta r)} |K_j(g,g_2)- K_j(\tilde g,g_2)  | \,dg_2\\
&\lesssim  { \|a\|_{L^\infty(\mathcal G)} \over \eta^{Q}}  \int_{B(g_*,\eta r)} {{\rho(g,\tilde g) \over \rho(g,g_*)^{Q+1}}} \ dg_2\\
&\lesssim  { (\eta r)^{-Q} \over \eta^{Q}}   (\eta r)^{Q} {\eta r\over r^{Q+1}}\\
&\lesssim \frac{\eta}{ \bar r^{Q}}.
\end{align*}
Hence we obtain that $$ |W_1(\tilde g)|\lesssim\frac{\eta}{ \bar r^{Q}} \chi_{B(g,\bar r)}(\tilde g).$$
Next we estimate $W_2(\tilde g)$. From the definition of $G(\tilde g)$ and $\tilde G(\tilde g)$, we have
\begin{align*}
|W_2(\tilde g)|
 &= { \chi_{ B(g_*, \eta r) }(\tilde g) |R_j^t(a)(\tilde g)| \over |R_j(\tilde G)(g)| }\\
 &= { \chi_{ B(g_*, \eta r) }(\tilde g)  \over |R_j(\tilde G)(g)| } \bigg| \int_{B(g,\eta r)}\Big[ K_j^t(\tilde g, g_2)- K_j^t(\tilde g, g)\Big] a(g_2) dg_2 \bigg|\\
 &\lesssim {  \ \|a\|_{L^\infty(\mathcal G)} \over \eta^{Q} }  \int_{B(g,\eta r)}  {d(g, g_2) \over d(g,g_*)^{Q+1}} dg_2 \\
&\lesssim { (\eta r)^{-Q} \over \eta^{Q}}   (\eta r)^{Q} {\eta r\over r^{Q+1}}\\
&\lesssim\frac{\eta}{ \bar r^{Q}},
\end{align*}
where we use the cancellation property of the atom $a(g_2)$ in the second equality,  and the fact that $K_j^t(g_1, g_2) = {K_j}(g_2, g_1)$ {and \eqref{MEHT}} in the first inequality, {also \eqref{equi metric d} in the second inequality.}
Hence we have $$ |W_2(\tilde g)|\lesssim\frac{\eta}{ \bar r^{Q}} \chi_{B(g_*,\bar r)}(\tilde g).$$

Combining the estimates of $W_1$ and $W_2$, we obtain that
\begin{align}\label{size}
 \Big|a(\tilde g)-\Pi_{j}(G, \tilde  G)(\tilde g)\Big|\lesssim  \frac{\eta}{ \bar r^{Q}} (\chi_{B(g_*,\bar r)}(\tilde g)+\chi_{B(g,\bar r)}(\tilde g)).
\end{align}
Next we point out that
\begin{align}\label{cancellation R}
\int_{\mathcal G} \Big[a(\tilde g)-\Pi_{j}(G, \tilde  G)(\tilde g)\Big) \Big] d\tilde g=0
\end{align}
since the atom $a(\tilde g)$ has cancellation and the second integral equals 0 just by the definitions of $\Pi_{j}(G, \tilde  G)(\tilde g)$.

Then the size estimate \eqref{size} and the cancellation \eqref{cancellation R},  together with \eqref{lemma Hardy e1} in Lemma \ref{lemma Hardy}, imply that
$$ \big\|a - \Pi_{j}(G, \tilde G) \big\|_{H^1(\mathcal G)} \leq  C \eta\log\frac{1}{\eta} <C\varepsilon. $$
This proves the proposition.
\end{proof}

We now prove Theorem \ref{thm weakfactorization}.
\begin{proof}[Proof of Theorem \ref{thm weakfactorization}]
By {Proposition \ref{prop H1 estimate of pi}}, we have that
$$
\left\Vert \Pi_l(G,\tilde G)\right\Vert_{H^1(\mathcal G)}\leq C \left\Vert G\right\Vert_{L^{p'}(\mathcal G)} \left\Vert \tilde G\right\Vert_{L^{p}(\mathcal G)},
$$ it is immediate that we have for any representation of $$F=\sum_{k=1}^\infty\sum_{j=1}^\infty\lambda_j^{(k)}\, \Pi_l(G_j^{(k)},\tilde G_{j}^{(k)})$$ that
\begin{align*}
{\left\Vert F \right\Vert_{H^1(\mathcal G)}}& \leq C \inf\left\{\sum_{k=1}^\infty\sum_{j=1}^{\infty}\left\vert \lambda_j^{(k)}\right\vert \left\Vert G_j^{(k)}\right\Vert_{L^{p'}(\mathcal G)}\left\Vert \tilde G_j^{(k)}\right\Vert_{L^p(\mathcal G)}: \right.\\
 &\hskip4cm\left. {F = } \sum_{k=1}^\infty\sum_{j=1}^{\infty}\lambda_j^{(k)} \, \Pi_l(G_j^{(k)},\tilde G_{j}^{(k)}) \right\}.
\end{align*}

We turn to show that the other inequality hold and that it is possible to obtain such a decomposition for any $F \in H^1(\mathcal G)$.  By the atomic decomposition for $H^1(\mathcal G)$, for any $F \in H^1(\mathcal G)$ we can find a sequence $\{\lambda_{j}^{(1)}\}\in \ell^1$ and sequence of $H^1(\mathcal G)$-atoms $a_j^{(1)}$ so that $F =\sum_{j=1}^{\infty} \lambda_j^{(1)} a_{j}^{(1)}$ and $\sum_{j=1}^{\infty} \left\vert \lambda_j^{(1)}\right\vert \leq C_0 \left\Vert F \right\Vert_{H^1(\mathcal G)}$.

We explicitly track the implied absolute constant $C_0$ appearing from the atomic decomposition since it will play a role in the convergence of the approach.  Fix $\varepsilon>0$ so that $\varepsilon C_0<1$. Then we also have small positive number $\eta\ll 1$ with $\eta \log\frac{1}{\eta} < \epsilon$.  We apply Proposition \ref{prop:ApproxFactorization} to each atom $a_{j}^{(1)}$.  So there exists $G_j^{(1)}, \tilde G_j^{(1)}\in L_c^\infty(\mathcal G)$ with compact supports and satisfying
$\left\Vert G_j^{(1)}\right\Vert_{L^{p'}(\mathcal G)}\left\Vert \tilde G_j^{(1)}\right\Vert_{L^{p}(\mathcal G)}\leq C\eta^{-Q} $ and
\begin{equation*}
\left\Vert a_j^{(1)}-\Pi_l(G_j^{(1)}, \tilde G_j^{(1)})\right\Vert_{H^1(\mathcal G)}<\varepsilon,\quad \forall j.
\end{equation*}
Now note that we have
\begin{align*}
F  &=  \sum_{j=1}^{\infty} \lambda_{j}^{(1)} a_j^{(1)}=   \sum_{j=1}^{\infty} \lambda_{j}^{(1)} \,\Pi_l(G_j^{(1)}, \tilde G_j^{(1)})+\sum_{j=1}^{\infty} \lambda_{j}^{(1)} \left(a_j^{(1)}-\Pi_l(G_j^{(1)}, \tilde G_j^{(1)})\right) \\
&:=  M_1+E_1.
\end{align*}
Observe that we have
\begin{align*}
\left\Vert E_1\right\Vert_{H^1(\mathcal G)}  &\leq  \sum_{j=1}^{\infty} \left\vert \lambda_j^{(1)}\right\vert \left\Vert a_j^{(1)}-\Pi_l(G_j^{(1)}, \tilde G_j^{(1)})\right\Vert_{H^1(\mathcal G)} \leq  \varepsilon \sum_{j=1}^{\infty} \left\vert \lambda_j^{(1)}\right\vert\\
& \leq \varepsilon C_0\left\Vert F \right\Vert_{H^1(\mathcal G)}.
\end{align*}
We now iterate the construction on the function $E_1$.  Since $E_1\in H^1(\mathcal G)$, we can apply the atomic decomposition in {$H^1(\mathcal G)$ to} find a sequence $\{\lambda_j^{(2)}\}\in \ell^1$ and a sequence of $H^1(\mathcal G)$-atoms $\{a_j^{(2)}\}$ so that $E_1=\sum_{j=1}^{\infty} \lambda_j^{(2)} a_{j}^{(2)}$ and
$$
\sum_{j=1}^{\infty} \left\vert \lambda_j^{(2)}\right\vert \leq C_0 \left\Vert E_1\right\Vert_{H^1(\mathcal G)}\leq \varepsilon C_0^2 \left\Vert F \right\Vert_{H^1(\mathcal G)}.
$$

Again, we will apply Proposition \ref{prop:ApproxFactorization} to each atom $a_{j}^{(2)}$.  So there exist $\tilde G_j^{(2)}, G_j^{(2)}\in L_c^\infty(\mathcal G)$ with compact supports and satisfying  the condition that $\left\Vert
G_j^{(2)}\right\Vert_{L^{p'}(\mathcal G)}\left\Vert \tilde G_j^{(2)}\right\Vert_{L^{p}(\mathcal G)}\leq C \eta^{-Q}$ and
\begin{equation*}
\left\Vert a_j^{(2)}-\Pi_l(G_j^{(2)},\tilde  G_j^{(2)})\right\Vert_{H^1(\mathcal G)}<\varepsilon,\quad \forall j.
\end{equation*}
We then have that:
\begin{align*}
E_1 & =  \sum_{j=1}^{\infty} \lambda_{j}^{(2)} a_j^{(2)}=   \sum_{j=1}^{\infty} \lambda_{j}^{(2)}\, \Pi_l(G_j^{(2)},\tilde  G_j^{(2)})+\sum_{j=1}^{\infty} \lambda_{j}^{(2)} \left(a_j^{(2)}-\Pi_l(G_j^{(2)},\tilde  G_j^{(2)})\right) \\
&:=  M_2+E_2.
\end{align*}
But, as before observe that
\begin{align*}
\left\Vert E_2\right\Vert_{H^1(\mathcal G)}&  \leq  \sum_{j=1}^{\infty} \left\vert \lambda_j^{(2)}\right\vert \left\Vert a_j^{(2)}-\Pi_l(G_j^{(2)},\tilde  G_j^{(2)})\right\Vert_{H^1(\mathcal G)} \leq  \varepsilon \sum_{j=1}^{\infty} \left\vert \lambda_j^{(2)}\right\vert\\
& \leq \left(\varepsilon C_0\right)^{2}\left\Vert F \right\Vert_{H^1(\mathcal G)}.
\end{align*}
And, this implies for $F$ that we have:
\begin{eqnarray*}
F & = & \sum_{j=1}^{\infty} \lambda_{j}^{(1)} a_j^{(1)} =   \sum_{j=1}^{\infty} \lambda_{j}^{(1)} \,\Pi_l(G_j^{(1)}, \tilde G_j^{(1)})+\sum_{j=1}^{\infty} \lambda_{j}^{(1)} \left(a_j^{(1)}-\Pi_l(G_j^{(1)},\tilde  G_j^{(1)})\right)\\
 & = & M_1+E_1=M_1+M_2+E_2 \\
 &=&  \sum_{k=1}^{2} \sum_{j=1}^{\infty} \lambda_{j}^{(k)} \,\Pi_l(G_j^{(k)},\tilde  G_j^{(k)})+E_2.
\end{eqnarray*}

Repeating this construction for each $1\leq k\leq K$ produces functions $\tilde G_j^{(k)}, G_j^{(k)}\in L_c^\infty(\mathcal G)$ with compact supports and  satisfying the condition that $\left\Vert \tilde G_j^{(k)}\right\Vert_{L^{p}(\mathcal G)}\left\Vert G_j^{(k)}\right\Vert_{L^{p'}(\mathcal G)}\leq C \eta^{-Q} $ for all $j$, sequences $\{\lambda_{j}^{(k)}\}\in \ell^1$ with $\left\Vert \{\lambda_{j}^{(k)}\}\right\Vert_{\ell^1}\leq \varepsilon^{k-1} C_0^k \left\Vert F \right\Vert_{H^1(\mathcal G)}$, and a function $E_K\in H^1(\mathcal G)$ with $\left\Vert E_K\right\Vert_{H^1(\mathcal G)}$ $\leq \left(\varepsilon C_0\right)^{K}\left\Vert F \right\Vert_{H^1(\mathcal G)}$ so that
$$
F =\sum_{k=1}^{K} \sum_{j=1}^\infty \lambda_{j}^{(k)} \,\Pi_l(G_j^{(k)},\tilde  G_j^{(k)})+E_K.
$$
Passing $K\to\infty$ gives the desired decomposition:  $$F =\sum_{k=1}^{\infty} \sum_{j=1}^\infty \lambda_{j}^{(k)}\, \Pi_l(G_j^{(k)},\tilde  G_j^{(k)})$$ in the sense of $H^1(\mathcal G)$.  We also have that:
$$
\sum_{k=1}^{\infty} \sum_{j=1}^\infty\left\vert \lambda_{j}^{(k)}\right\vert \leq \sum_{k=1}^{\infty} \varepsilon^{-1} (\varepsilon C_0)^{k} \left\Vert F \right\Vert_{H^1(\mathcal G)}= \frac{ C_0}{1-\varepsilon C_0}\left\Vert F \right\Vert_{H^1(\mathcal G)}.
$$
The proof of Theorem \ref{thm weakfactorization} is complete.
\end{proof}

We now turn to the proof of Theorem \ref{thm1}.

\begin{proof}[Proof of Theorem \ref{thm1}]

The upper bound in this theorem is pointed out in \eqref{upper bound}.  It suffices to consider only the  lower bound.

Suppose now $b\in {\rm BMO}(\mathcal G)$ such that $[b,R_l]$ is bounded on $L^{p}(\mathcal{G})$ for some $1<p<\infty$.
We now show that
$$ \left\Vert b\right\Vert_{ {\rm BMO}(\mathcal{G})}
\leq C\|[b,R_l] : L^{p}(\mathcal{G})\to L^{p}(\mathcal{G})\|. $$

From Theorem \ref{thm weakfactorization}, we already know that for every $f$ in $H^1(\mathcal G)$, we have a constructive weak factorisation
for $f$, i.e, for $f\in H^1(\mathcal{G})$,
there exists a weak  factorisation of $f$ such that
\begin{align}\label{wf}
 f=\sum_{k=1}^{\infty} \sum_{s=1}^\infty \lambda_{s}^{(k)} \,\Pi_{l}(G_s^{(k)},\tilde  G_{s}^{(k)})
\end{align}
in the sense of $H^1(\mathcal{G})$, where the sequences $\{\lambda_s^{(k)}\}\in \ell^1$ and  functions $ G_s^{(k)}, \tilde G_{s}^{(k)}\in L^{\infty}_c(\mathcal{G})$,
and that
$$
\sum_{k=1}^{\infty} \sum_{s=1}^\infty\left\vert \lambda_{s}^{(k)}\right\vert \left\Vert \tilde G_s^{(k)}\right\Vert_{L^{p}(\mathcal{G})}  \left\Vert G_{s}^{(k)}\right\Vert_{L^{p'}(\mathcal{G})} \leq  C\left\Vert f\right\Vert_{H^1(\mathcal{G})}.
$$

As a consequence, we obtain that
\begin{align}\label{eeee}
\langle b,f \rangle &=    \left\langle b, \sum_{k=1}^{\infty} \sum_{s=1}^\infty \lambda_{s}^{(k)} \,\Pi_{l}(G_s^{(k)},\tilde  G_{s}^{(k)}) \right\rangle \nonumber\\
&=\sum_{k=1}^{\infty} \sum_{s=1}^\infty \lambda_{s}^{(k)} \, \left\langle b, \Pi_{l}(G_s^{(k)},\tilde  G_{s}^{(k)}) \right\rangle\nonumber\\
&=\sum_{k=1}^{\infty} \sum_{s=1}^\infty \lambda_{s}^{(k)} \, \left \langle G_s^{(k)} , [b,R_l](\tilde G_{s}^{(k)})\right\rangle,\nonumber
\end{align}
which the last equality follows from \eqref{exchange}. This yields that
$$
|\langle b,f \rangle| \leq \sum_{k=1}^{\infty} \sum_{s=1}^\infty |\lambda_{s}^{(k)}| \  |\langle G_s^{(k)} , [b,R_l](\tilde G_{s}^{(k)})\rangle|,
$$
which is further controlled by
\begin{align*}
& \sum_{k=1}^{\infty}\sum_{s=1}^\infty \left\vert \lambda_s^{(k)}\right\vert \left\Vert [b,R_l](\tilde G_{s}^{(k)})\right\Vert_{L^p(\mathcal{G})}\left\Vert G_s^{(k)}\right\Vert_{L^{p'}(\mathcal{G})}\\
& \leq  \|[b,R_l] : L^{p}(\mathcal{G}) \to L^{p}(\mathcal{G})\| \ \ \sum_{k=1}^{\infty}\sum_{s=1}^\infty \left\vert \lambda_s^{(k)}\right\vert \left\Vert \tilde G_s^{(k)}\right\Vert_{L^{p}(\mathcal{G})} \left\Vert G_{s}^{(k)}\right\Vert_{L^{p'}(\mathcal{G})}\\
& \leq   C \|[b,R_l] : L^{p}(\mathcal{G}) \to L^{p}(\mathcal{G})\| \left\Vert f\right\Vert_{H^1(\mathcal{G})}.
\end{align*}
By the duality between $ {\rm BMO}(\mathcal{G})$ and $H^1(\mathcal{G})$, we have that:
\begin{align*}
\left\Vert b\right\Vert_{ {\rm BMO}(\mathcal{G})}&\approx \sup_{f\in H^1(\mathcal{G}):\ \left\Vert f\right\Vert_{H^1(\mathcal{G})}\leq 1} \left\vert \int_{\mathcal{G}} b(g)f(g)\,dg\right\vert\\
&\leq C\|[b,R_l] : L^{p}(\mathcal{G})\to L^{p}(\mathcal{G})\|.
\end{align*}
The proof of Theorem \ref{thm1} is complete.
\end{proof}

\section{Application: div-curl Lemma and Proof of Theorem \ref{thm2} }
\setcounter{equation}{0}

In this section, we introduce a version of the curl operator and establish the div-curl lemma on stratified nilpotent Lie groups $\mathcal G$ with respect to Hardy space $H^1(\mathcal G)$, based on the boundedness of the commutator in Theorem \ref{thm1}.

To ease the burden of notation on stratified nilpotent Lie groups $\mathcal G$, we first introduce the curl operator on $\mathbb H^n$ and then establish the div-curl lemma on Heisenberg group  $\mathbb H^n$ (see Theorem \ref{thm2 on H} below).
We note that in \cite{FTT} they also studied the div-curl Lemma on the three dimensional Heisenberg group $\mathbb H$ with respect to the Sobolev space, while the form of their definition of curl operator is closely related to the dimension.
At the end of this section, we provide a suitable definition of the curl operator on stratified nilpotent Lie groups, and point out that the div-curl lemma follows by repeating the proof of that on Heisenberg groups.

\subsection{Curl operator on Heisenberg groups}

We now introduce the curl operator on the Heisenberg group $\mathbb H^n$.
One of the key features of the curl operator on $\R^n$ is that any curl free vector field is
a conservative vector field. Hence, we introduce the curl operator on $\mathbb H^n$, aiming to
preserve this property, which plays an important role in establishing the div-curl lemma.

Note that the vector fields $X_1,\ldots, X_{2n}$ define a vector bundle over $\H^n$ (the horizontal vector bundle ${\bf H}\H^n$) that can be canonically identified with a vector subbundle of the tangent vector bundle of $\R^{2n+1}$. Since each fiber of ${\bf H}\H^n$ can be canonically identified with a vector subspace of $\mathbb R^{2n+1}$, each section $\phi$ of ${\bf H}\H^n$ (called briefly horizontal section) can be identified with a map $\phi : \H^n \to \mathbb R^{2n}$. At each point $p \in \H^n$ the horizontal fiber is indicated as ${\bf H}\H^n_p$ and each fiber can be endowed with the scalar product $\langle\cdot, \cdot\rangle_p$ and the norm $| \cdot |_p$ that make the vector fields $X_1,\ldots,X_{2n}$ orthonormal. Hence we shall also identify a section of ${\bf H}\H^n $ with its canonical coordinates with respect to this moving frame. In this way, a section $\phi$ will be identified with a function $\phi = (\phi_1,\ldots,\phi_{2n}) : \H^n \to \mathbb R^{2n}$. We stress that a horizontal section {$\phi = (\phi_1,\ldots,\phi_{2 n})$}, which can be thought as a multidimensional object, can always be canonically identified with
{the $\mathbb R^{2n}$-valued} function $\phi_1X_1 +\cdots + \phi_{2n}X_{2n}$.

We denote by $C^\infty({\bf H}\H^n)$ the vector space of smooth sections of ${\bf H}\H^n$, and by $\mathcal D({\bf H}\H^n)$ the subset of compactly supported functions in $C^\infty({\bf H}\H^n)$.
Using canonical coordinates, clearly $C^\infty({\bf H}\H^n)$ and $\mathcal D({\bf H}\H^n)$ can be identified respectively with $C^\infty(\H^n)^{2n}$ and $\mathcal D(\H^n)^{2n}$. Analogously the space of horizontal distributions $\mathcal{D}'({\bf H}\H^n)$ can be identified with $\mathcal {D}'(\H^n)^{2n}$.
We denote by $L^2({\bf H}\H^n)$ the space of all measurable sections $\phi=(\phi_1,\ldots,\phi_{2n})$ of ${\bf H}\H^n$ such that $\phi\in L^2(\H^n)^{2n}$.
We also denote by $W^{1,2}(\H^n)$
the set of real-valued functions $f\in L^2(\H^n)$ such that $X_1f,\ldots, X_{2n}f$ belong to $L^2(\H^n)$, endowed with
its natural norm.

Let us recall the horizontal gradient and the horizontal divergence on Heisenberg groups (and similar for general stratified Lie groups)

\begin{definition}
For $f\in \mathcal D'(\H^n)$, the horizontal gradient $\nabla_{\H^n}$ is defined as
$$ \nabla_{\H^n}f:= (X_1f,\ldots, X_{2n}f) $$
and for $\phi= (\phi_1,\ldots,\phi_{2n})\in\mathcal D'({\bf H}\H^n)$, the horizontal divergence is defined as
$$ \operatorname{div}_{\H^n} \phi = \sum_{j=1}^{2n} X_j\phi_j. $$
\end{definition}

We recall that the standard curl operator on $\mathbb R^{2n+1}$ has the form as follows (see for example \cite[Page 507]{D}):
let $\mathcal V=(\mathcal V_1,\ldots, \mathcal V_{2n+1})\in \big( \mathcal D'(\mathbb R^{2n+1})\big)^{2n+1}$, then  ${\rm curl}_{\mathbb R^{2n+1}} \mathcal V$
is a matrix of distributions whose entries are given by
\begin{equation}\label{def curl}
\left\langle ({\rm curl}_{\mathbb R^{2n+1}} \mathcal V)_{i,j}, \phi \right\rangle  =
\left\langle \mathcal V_j, {\partial\phi\over \partial x_i} \right\rangle - \left\langle \mathcal V_i, {\partial\phi\over \partial x_j} \right\rangle 
\end{equation}
for $\phi \in \mathcal D(\mathbb R^{2n+1})$ and $i, j=1,\ldots,2n+1$, where $\mathcal D(\mathbb R^{2n+1})$ is the space of all smooth functions with compact supports and with the usual topology.

Recall that we have the vector fields $X_j$ and $X_{n+j}$($=Y_j$) ($j=1,2,\ldots,n$) as in \eqref{XYT}. Then from the definition of  these {$X_j$'s} we have
\begin{align}\label{H1}
  \begin{bmatrix}
     X_1 \\
     \cdot\\
     \cdot\\
     X_n\\
        X_{n+1} \\
     \cdot\\
     \cdot\\
     X_{2n}\\
     T
     \end{bmatrix}
     =
     \begin{bmatrix}
      I_{2n},\  \vec{J} \\
       \vec O, \  1
     \end{bmatrix}
     \begin{bmatrix}
     {\partial\over \partial x_1} \\
     \cdot\\
     \cdot\\
     {\partial\over \partial x_n} \\
    {\partial\over \partial y_1} \\
     \cdot\\
     \cdot\\
     {\partial\over \partial y_n}\\
     {\partial\over \partial t}
           \end{bmatrix},
\end{align}
where $I_{2n}$ is the identity matrix of order $2n$,
 $\vec O$ is the vector
 $$ \vec O=[0,\ldots,0] $$
 with $2n$ elements,
and
$\vec J$ is the vector
$$ \vec J=[ 2y_1,\ldots,2y_n,-2x_1,\ldots,-2x_n ]^T, $$
here $\vec v^T$ means the transpose of the vector $\vec v$.

We now denote
\begin{align}\label{H2}
A     =
     \begin{bmatrix}
      I_{2n},\  \vec{J} \\
       \vec O, \  1
     \end{bmatrix}.
\end{align}
Then it is direct to see that $A$ is invertible {whose elements are numbers or polynomials}. Then from \eqref{H1} we have
\begin{align}\label{H3n}
          \begin{bmatrix}
     {\partial\over \partial x_1} \\
     \cdot\\
     \cdot\\
     {\partial\over \partial x_n} \\
    {\partial\over \partial y_1} \\
     \cdot\\
     \cdot\\
     {\partial\over \partial y_n}\\
     {\partial\over \partial t}
           \end{bmatrix}=
A^{-1}  \begin{bmatrix}
     X_1 \\
     \cdot\\
     \cdot\\
     X_n\\
        X_{n+1} \\
     \cdot\\
     \cdot\\
     X_{2n}\\
     T
     \end{bmatrix},
\end{align}

We note that from the definition of $X_j$ and $T$, we have
$$ {\partial\over \partial t} = T = {1\over 4}[X_{n+i},X_i] = {1\over 4} (X_{n+i}X_i - X_iX_{n+i})$$
for $i=1, \ldots, n$. The curl operator can be defined as follows.  
\begin{definition}\label{def curl 2}
For every $V=\sum_{j=1}^{2n}V_jX_j\in \mathcal D'({\bf H}\H^n)$, and for a fixed $i=1,\ldots,n$, let
\begin{align}\label{H3}
 \mathcal V=
A^{-1}  \begin{bmatrix}
     V_1 \\
     \cdot\\
     \cdot\\
     V_{2n}\\
     {{1\over 4} (X_{n+i} V_i - X_i V_{n+i})}
     \end{bmatrix}.
\end{align}
Then we define
\begin{align}
\operatorname{curl}_{\H^n} V:= \operatorname{curl}_{\mathbb R^{2n+1}} \mathcal V,
\end{align}
where $\operatorname{curl}_{\mathbb R^{2n+1}} \mathcal V$ is the standard curl operator on $\mathbb R^{2n+1}$.
\end{definition}

We remark that in Definition \ref{def curl 2}, the curl operator is not unique.
At least for each $i=1,\ldots,n$,
we have a corresponding curl operator.
We also remark that from Definition \ref{def curl 2} {and \eqref{H1}}, we have
$$ { {\rm curl}_{\mathbb H^n} \circ \nabla_{\mathbb H^n} = \operatorname{curl}_{\mathbb R^{2n+1}} \circ \nabla_{\mathbb \R^{2 n + 1}} = 0.} $$

The aim that we introduce the curl operator as above is to show that
any curl free vector field is a conservative vector field, which is a well-known result on $\R^n$.
To be more specific, we have the following result.
\begin{proposition}\label{prop curl}
For every $V = (V_1, \cdots, V_{2n}) \in L^2({\bf H}\H^n)$ satisfying $ {\rm curl}_{\H^n} V={\bf 0} $, there exists $\phi\in \mathcal D'(\H^n)$ such that $ V=\nabla_{\H^n}\phi $. Moreover, we can take
\begin{align} \label{NE1}
\phi = - \left(- \sum_{i = 1}^{2 n} X_i^2 \right)^{-\frac{1}{2}} \left\{ \sum_{i = 1}^{2 n} \left[ \left(- \sum_{i = 1}^{2 n} X_i^2 \right)^{-\frac{1}{2}} X_i \right] V_i \right\} \in L^{\frac{2 n + 2}{n}}(\H^n).
\end{align}
\end{proposition}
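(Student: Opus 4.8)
The plan is to take literal advantage of the way $\operatorname{curl}_{\H^n}$ is defined: by Definition \ref{def curl 2}, $\operatorname{curl}_{\H^n}V=\operatorname{curl}_{\R^{2n+1}}\mathcal V$, so the hypothesis $\operatorname{curl}_{\H^n}V={\bf 0}$ is exactly the statement that the lifted field $\mathcal V$ is a closed distributional $1$-form on $\R^{2n+1}$. This reduces the \emph{existence} of a horizontal potential to the classical Poincaré lemma on $\R^{2n+1}$, after which the \emph{explicit} formula and its integrability follow from the $L^2$-theory of the Riesz transforms. Throughout I write $\mathcal L:=-\sum_{i=1}^{2n}X_i^2$, so that $R_i=X_i\mathcal L^{-1/2}$ and, since each $X_i$ is formally skew-adjoint, $R_i^{*}=-\mathcal L^{-1/2}X_i$. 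The candidate is then
\[
\phi=-\mathcal L^{-1/2}\Big\{\sum_{i=1}^{2n}\big(\mathcal L^{-1/2}X_i\big)V_i\Big\}
=\mathcal L^{-1/2}\sum_{i=1}^{2n}R_i^{*}V_i=-\mathcal L^{-1}\operatorname{div}_{\H^n}V .
\]

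First I would establish existence. With $\mathcal V=A^{-1}\big(V_1,\dots,V_{2n},\tfrac14(X_{n+i}V_i-X_iV_{n+i})\big)^{T}$ as in \eqref{H3}, the hypothesis says $\operatorname{curl}_{\R^{2n+1}}\mathcal V={\bf 0}$, i.e. $\mathcal V$ is a closed distributional $1$-form on the convex set $\R^{2n+1}$. By the distributional Poincaré lemma there is $\psi\in\mathcal D'(\R^{2n+1})$ with $\mathcal V=\nabla_{\R^{2n+1}}\psi$. Feeding this into the frame-change identity \eqref{H3n}, which reads $\nabla_{\R^{2n+1}}\psi=A^{-1}(X_1\psi,\dots,X_{2n}\psi,T\psi)^{T}$, and cancelling the invertible matrix $A^{-1}$, gives $V_m=X_m\psi$ for $m=1,\dots,2n$; the remaining $T$-component is automatically consistent because $T=\tfrac14[X_{n+i},X_i]$. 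Hence $V=\nabla_{\H^n}\psi$, which proves the first assertion.

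Next I would recover the explicit formula and its integrability. Applying $\operatorname{div}_{\H^n}$ to $V=\nabla_{\H^n}\psi$ yields $\operatorname{div}_{\H^n}V=\sum_iX_i^2\psi=-\mathcal L\psi$, so with $\phi=-\mathcal L^{-1}\operatorname{div}_{\H^n}V$ one checks directly, for $1\le k\le 2n$,
\[
X_k\phi=-X_k\mathcal L^{-1}\sum_{i=1}^{2n}X_iV_i
=-X_k\mathcal L^{-1}\sum_{i=1}^{2n}X_i^{2}\psi
=X_k\mathcal L^{-1}\mathcal L\psi=X_k\psi=V_k ,
\]
so that $\nabla_{\H^n}\phi=V$ for this canonical choice. (This is the group analogue of the Euclidean identity, and is consistent with $\sum_{i=1}^{2n}R_i^{*}R_i=I$, which follows from $R_i^{*}R_i=-\mathcal L^{-1/2}X_i^2\mathcal L^{-1/2}$ and $\sum_i(-X_i^2)=\mathcal L$.) For the integrability, since $\mathcal L^{-1/2}X_i=-R_i^{*}$ is the adjoint of a Calderón--Zygmund operator it is bounded on $L^2(\H^n)$, whence $g:=\sum_i(\mathcal L^{-1/2}X_i)V_i\in L^2(\H^n)$; as $\H^n$ has homogeneous dimension $Q=2n+2$, the fractional integral $\mathcal L^{-1/2}$ maps $L^2(\H^n)$ into $L^{q}(\H^n)$ with $\tfrac1q=\tfrac12-\tfrac1Q$, i.e. $q=\tfrac{2n+2}{n}$, by the sub-elliptic Hardy--Littlewood--Sobolev inequality of \cite{FoSt}. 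Therefore $\phi=-\mathcal L^{-1/2}g\in L^{(2n+2)/n}(\H^n)$.

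The two points requiring care are the following. First, the Poincaré lemma must be invoked at the level of distributions, since $\mathcal V$ has an entry of order one (it differentiates $V\in L^2$); this is standard but should be stated as such. Second, in the verification $\nabla_{\H^n}\phi=V$ one uses $\mathcal L^{-1}\mathcal L\psi=\psi$ up to the kernel of $\mathcal L$, and the harmless point is that any distribution with vanishing horizontal gradient is constant, which is forced by Hörmander's bracket-generating condition on $\{X_i\}$; hence such ambiguity does not affect the gradient. The remaining inputs — the $L^2$-boundedness of $R_i^{*}$ and the Folland--Stein embedding — are classical.
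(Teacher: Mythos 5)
Your proof is correct and follows essentially the same route as the paper: reduce the hypothesis to $\operatorname{curl}_{\R^{2n+1}}\mathcal V=\mathbf 0$ via Definition \ref{def curl 2}, apply the distributional Poincar\'e lemma to get a potential $\psi$, convert back through the invertible frame change to obtain $V=\nabla_{\H^n}\psi$, and then derive \eqref{NE1} from the $L^2$-boundedness of the (adjoint) Riesz transforms together with the Hardy--Littlewood--Sobolev inequality with $Q=2n+2$. The only difference is that you spell out the verification $X_k\phi=V_k$ and the kernel-of-$\mathcal L$ caveat, which the paper leaves implicit.
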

\begin{proof}
For every $V\in \mathcal D'({\bf H}\H^n)$ satisfying $ \operatorname{curl}_{\H^n} V={\bf 0} $, from Definition \ref{def curl 2}, we obtain that
$$ {\operatorname{curl}_{\R^{2 n + 1}}} \mathcal V ={\bf0}, $$
where $\mathcal V$ is the vector field in $\mathcal D'(\H^n,\mathbb R^{2n+1})$ associated to $V$.

Hence, we obtain that there exists a distribution {$\phi\in \mathcal D'(\R^{2 n + 1}) = \mathcal D'(\H^n)$ such that $\mathcal V = \nabla_{\R^{2 n + 1}}\phi$}. Then, by using the Definition \ref{def curl 2} again we get that
\begin{align}\label{H4}
  \begin{bmatrix}
     V_1 \\
     \cdot\\
     \cdot\\
     \cdot\\
     V_{2n}\\
     {1\over 4} (X_{n+i}V_i - X_iV_{n+i})
     \end{bmatrix} = A \mathcal V = A \,\nabla\phi,
\end{align}
which gives
$$ V_j = {\partial\phi\over \partial x_j} + 2y_j  {\partial\phi\over \partial x_{2n+1}}  $$
and
$$ V_{n+j} = {\partial\phi\over \partial x_{n+j}} - 2x_j  {\partial\phi\over \partial x_{2n+1}} $$
for $j=1,\ldots,n$. By identifying $ {\partial\phi\over \partial x_{2n+1}}$ as $ {\partial\phi\over \partial t}$ we obtain that
$ V_j=X_j\phi $
for $j=1,\ldots,2n$, which implies that
$ V=\nabla_{\H^n}\phi.$ Now, from the $L^2$-boundedness of Riesz transforms and the Hardy-Littlewood-Sobolev inequality, we deduce \eqref{NE1}.
\end{proof}

\subsection{Div-curl lemma on Heisenberg groups}

\begin{theorem}\label{thm2 on H}
Suppose that $E = (E_1, \cdots, E_{2 n})$, $B = (B_1, \cdots, B_{2 n}) \in L^2(\H^n,\R^{2n})$ are vector fields on $\H^n$ taking values in $\R^{2n}$ and satisfy
 $$ \operatorname{div}_{\H^n} E = 0,\quad \operatorname{curl}_{\H^n} B = 0. $$
 Then we have $E\cdot B \in H^1(\H^n)$ with
 $$ \| E\cdot B \|_{H^1(\H^n)}\lesssim\|E\|_{L^2(\H^n,\R^{2n})}\|B\|_{L^2(\H^n,\R^{2n})}. $$
 \end{theorem}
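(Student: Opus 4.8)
The plan is to reduce the statement to the $L^2$ boundedness of the Riesz commutators (Theorem \ref{thm1}) via the $H^1$--$\mathrm{BMO}$ duality. Since $\operatorname{curl}_{\H^n}B=0$, Proposition \ref{prop curl} lets me write $B=\nabla_{\H^n}\phi$ with $\phi=(-\Delta)^{-\frac12}\psi$, where $\Delta=\sum_{i=1}^{2n}X_i^2$ and $\psi:=\sum_{i=1}^{2n}R_i^t B_i$. Consequently $B_j=X_j\phi=X_j(-\Delta)^{-\frac12}\psi=R_j\psi$ for each $j$, and by the $L^2$ boundedness of the Riesz transforms $\|\psi\|_{L^2(\H^n)}\lesssim\|B\|_{L^2(\H^n,\R^{2n})}$. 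Moreover $E\cdot B=\sum_jE_jB_j\in L^1(\H^n)$ with $\|E\cdot B\|_{L^1}\le\|E\|_{L^2}\|B\|_{L^2}$ by Cauchy--Schwarz, so it is legitimate to estimate its $H^1$ norm by pairing against $\mathrm{BMO}(\H^n)$.

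Next, fix $b\in L^\infty(\H^n)\cap\mathrm{BMO}(\H^n)$ (a class for which every pairing below is finite) and compute, using $B_j=R_j\psi$ and the splitting $b\,R_j\psi=[b,R_j]\psi+R_j(b\psi)$,
\begin{align*}
\langle b,E\cdot B\rangle=\sum_{j=1}^{2n}\langle E_j,b\,R_j\psi\rangle=\sum_{j=1}^{2n}\langle E_j,[b,R_j]\psi\rangle+\Big\langle\sum_{j=1}^{2n}R_j^tE_j,\,b\psi\Big\rangle.
\end{align*}
The last term vanishes: testing against a test function $h$,
\begin{align*}
\Big\langle\sum_{j=1}^{2n}R_j^tE_j,h\Big\rangle=\sum_{j=1}^{2n}\langle E_j,R_jh\rangle=\sum_{j=1}^{2n}\langle E_j,X_j(-\Delta)^{-\frac12}h\rangle=-\big\langle\operatorname{div}_{\H^n}E,(-\Delta)^{-\frac12}h\big\rangle=0,
\end{align*}
since $\operatorname{div}_{\H^n}E=0$; hence $\sum_jR_j^tE_j=0$ in $L^2(\H^n)$. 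For the remaining term, Theorem \ref{thm1} with $p=2$ gives $\|[b,R_j]\psi\|_{L^2}\lesssim\|b\|_{\mathrm{BMO}}\|\psi\|_{L^2}$, so that
\begin{align*}
|\langle b,E\cdot B\rangle|\le\sum_{j=1}^{2n}\|E_j\|_{L^2}\,\|[b,R_j]\psi\|_{L^2}\lesssim\|b\|_{\mathrm{BMO}(\H^n)}\,\|E\|_{L^2(\H^n,\R^{2n})}\,\|B\|_{L^2(\H^n,\R^{2n})}.
\end{align*}

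Taking the supremum over all such $b$ with $\|b\|_{\mathrm{BMO}}\le1$ and invoking the $H^1$--$\mathrm{BMO}$ duality (as already used in Proposition \ref{prop H1 estimate of pi}) yields $\|E\cdot B\|_{H^1(\H^n)}\lesssim\|E\|_{L^2}\|B\|_{L^2}$. The two points requiring care are the divergence-free cancellation $\sum_jR_j^tE_j=0$, which must be justified distributionally for merely $L^2$ data, namely the integration by parts $\langle E_j,X_j(-\Delta)^{-\frac12}h\rangle=-\langle X_jE_j,(-\Delta)^{-\frac12}h\rangle$ and the admissibility of the (non-compactly supported but rapidly decaying) field $(-\Delta)^{-\frac12}h$; and the upgrade from the finite $\mathrm{BMO}$-pairing bound to genuine membership of the $L^1$ function $E\cdot B$ in $H^1$, which rests on the predual characterisation of $\mathrm{BMO}(\H^n)$ together with the weak-$*$ density of $L^\infty\cap\mathrm{BMO}$ obtained by truncation. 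Finally, I expect the identical argument to prove Theorem \ref{thm2} on a general stratified group $\mathcal G$: the factorisation $B_j=R_j\psi$ and the cancellation $\sum_jR_j^tE_j=0$ use only $\operatorname{curl}_{\mathcal G}B=0$, $\operatorname{div}_{\mathcal G}E=0$, and Theorem \ref{thm1}, none of which is special to $\H^n$.
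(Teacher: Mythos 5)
Your proposal is correct and follows essentially the same route as the paper: write $B_j=R_j\psi$ via Proposition \ref{prop curl}, use $\operatorname{div}_{\H^n}E=0$ to get $\sum_j R_j^t E_j=0$, express the pairing $\langle b,E\cdot B\rangle$ through the commutators $[b,R_j]$, and conclude by Theorem \ref{thm1} and $H^1$--$\mathrm{BMO}$ duality. The only difference is cosmetic (you apply the commutator splitting after pairing with $b$, while the paper first rewrites $E\cdot B$ as $\sum_j\bigl(E_jR_j\psi-\psi R_j^tE_j\bigr)$), and the extra care you note about the distributional justification of $\sum_jR_j^tE_j=0$ and the duality upgrade is a welcome refinement of steps the paper treats implicitly.
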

\begin{proof}
Since $E,B\in L^2(\H^n,\R^{2n})$ are vector fields on $\H^n$ taking values in $\R^{2n}$, it is direct that
$ E\cdot B\in L^1(\H^n) $ with
$$  \| E\cdot B \|_{L^1(\H^n)}\leq \|E\|_{L^2(\H^n,\R^{2n})}\|B\|_{L^2(\H^n,\R^{2n})}  $$

Now since $\operatorname{curl}_{\H^n} B=0$, from Proposition \ref{prop curl} above,
there exists $$\psi = - \sum_{i = 1}^{2 n} (-\Delta)^{-\frac{1}{2}} X_i B_i \in L^2(\H^n)$$ such that $B=\nabla_{\H^n} (-\Delta)^{-1/2}\psi =(R_1 \psi, \ldots, R_{2n} \psi)$. Moreover, using the $L^2$-isometry property of Riesz transform, we have
$\|B\|_{L^2(\H^n,\R^{2n})} = \|\psi\|_{L^2(\H^n)}$.

Next, we claim that for $E\in L^2(\H^n,\R^{2n})$ with $ \operatorname{div}_{\H^n} E(g)=0$, we have that
\begin{align}\label{div0}
 \sum_{j=1}^{2n}   R_j^t(E_j)(g)=0.
\end{align}
To see this, we recall that
$$ \operatorname{div}_{\H^n} E = \sum_{j=1}^{2n} X_j E_j$$
and that
$$ R_j^t=-(-\Delta)^{-1/2} X_j. $$
Thus, we have
\begin{align*}
 \sum_{j=1}^{2n}   R_j^t(E_j) = \sum_{j=1}^{2n} -(-\Delta)^{-1/2} X_j E_j = -(-\Delta)^{-1/2}  \sum_{j=1}^{2n} X_j E_j =0,
\end{align*}
which implies \eqref{div0} and the claim holds.

Thus, from \eqref{div0}, we obtain that
\begin{align}
E(g)\cdot B(g) &= \sum_{j=1}^{2n} E_j(g)\, B_j(g)= \sum_{j=1}^{2n} E_j(g)\, R_j(\psi)(g) \nonumber\\
&= \sum_{j=1}^{2n} \bigg( E_j(g)\, R_j(\psi)(g) - \psi(g) R_j^t(E_j)(g) \bigg).\label{EB}
\end{align}

Based on the last equality \eqref{EB} above, it is direct that
$$\int_{\H^n} E(g)\cdot B(g) \,dg=0. $$

Now test the equality \eqref{EB} over all functions in BMO$(\H^n)$, we see that for every $b\in {\rm BMO}(\H^n)$,
\begin{align*}
&\int_{\H^n} E(g)\cdot B(g)\ b(g)dg  \\
&= \sum_{j=1}^{2n}\int_{\H^n} \bigg( E_j(g)\, R_j(\psi)(g) - \psi(g) R_j^t(E_j)(g) \bigg) b(g)dg\\
&= \sum_{j=1}^{2n}\int_{\H^n} [b,R_j](\psi)(g)\,E_j(g)\, dg.
\end{align*}
Since $b\in {\rm BMO}(\H^n)$, from Theorem \ref{thm1} we have that
$$ \|[b,R_j]\|_{L^2(\H^n)\to L^2(\H^n)} \approx \|b\|_{ {\rm BMO}(\H^n)  },  $$
which implies that
\begin{align*}
\bigg|\int_{\H^n} E(g)\cdot B(g)\ b(g)dg  \bigg|
&\lesssim \sum_{j=1}^{2n}  \|b\|_{ {\rm BMO}(\H^n)  } \|E_j\|_{L^2(\H^n)}\,\|\psi\|_{L^2(\H^n)}\\
&\lesssim\|b\|_{ {\rm BMO}(\H^n)  } \|E\|_{L^2(\H^n,\R^{2n})} \|B\|_{L^2(\H^n,\R^{2n})}.
\end{align*}
This yields that
$$ \| E\cdot B \|_{H^1(\H^n)}\lesssim\|E\|_{L^2(\H^n,\R^{2n})}\|B\|_{L^2(\H^n,\R^{2n})}. $$
The proof of Theorem \ref{thm2 on H} is complete.
\end{proof}

\subsection{Div-curl lemma on Lie groups $\mathcal G$ and proof of Theorem \ref{thm2}}

From the proof of Theorem \ref{thm2 on H} in Section 5.2 above, it is clear that it relies on Theorem \ref{thm1} and  the key points  that we used in the proof are Proposition \ref{prop curl} and the fact \eqref{div0}.  We note that Proposition \ref{prop curl} holds since we provide a suitable version of curl operator on the Heisenberg groups $\mathbb H^n$. We also point out that the fact \eqref{div0} is also true on more general settings, such as homogeneous groups.  Thus, to establish a similar result on stratified nilpotent Lie groups, i.e., Theorem \ref{thm2}, it suffices to introduce an appropriate curl operator following the same way as in Section 5.1, such that a similar version of Proposition \ref{prop curl} holds on stratified nilpotent Lie groups.

We now give the curl operator on stratified nilpotent Lie group {$\mathcal G$, by using some elementary notations and properties which can be found in \cite{BLU}.}
We write $\mathcal G = \mathbb R^n\times  \mathbb R^{n_1}\times\cdots\times  \mathbb R^{n_{k-1}}$.
Now set
$$ Z_i(o) ={\partial\over\partial x_i},\quad i=1,\ldots, n $$
and
$$ T_{i,j}(o) =  {\partial\over\partial t_{i,j}},\quad i=1,\ldots, k-1,\quad j=1,\ldots, n_i. $$
Then we have
\begin{align}\label{m1}
  \begin{bmatrix}
     X_1 \\
     \cdot\\
     \cdot\\
     X_n
     \end{bmatrix}
     = A
     \begin{bmatrix}
     Z_1 \\
     \cdot\\
     \cdot\\
     Z_n
     \end{bmatrix},
\end{align}
where $A$ is a $n \times n$ invertible matrix.  Next, we further have
\begin{align}\label{m2}
     \begin{bmatrix}
     Z_1 \\
     \cdot\\
     \cdot\\
     \cdot\\
     Z_n
     \end{bmatrix}
     = \Big( I_n, B \Big)
     \begin{bmatrix}
     {\partial \over \partial x_1} \\
     \cdot\\
     {\partial \over \partial x_n} \\[2pt]
     {\partial \over \partial t_{1,1}} \\
     \cdot\\
     {\partial \over \partial t_{1,n_1}} \\
     \cdot\\
     {\partial \over \partial t_{k-1,1}} \\
    \cdot\\
     {\partial \over \partial t_{k-1,n_{k-1}}},
     \end{bmatrix},
\end{align}
where $B$ is an $n\times \Big(\sum_{i=1}^{k-1} n_i\Big)$ matrix whose elements are polynomials. Now combining
\eqref{m1} and \eqref{m2}, we obtain that
\begin{align}\label{m3}
  \begin{bmatrix}
     X_1 \\
     \cdot\\
     \cdot\\
     X_n
     \end{bmatrix}
     = \Big( A, AB \Big)
     \begin{bmatrix}
     {\partial \over \partial x_1} \\
     \cdot\\
     \cdot\\
    \cdot\\
     {\partial \over \partial t_{k-1,n_{k-1}}}
     \end{bmatrix}.
\end{align}
Moreover, note that we have
\begin{align}\label{m4}
  \begin{bmatrix}
     T_{1,1} \\
     \cdot\\
     T_{1,n_1}\\
     \cdot\\
     \cdot\\
    T_{k-1,1}\\
     \cdot\\
    T_{k-1,n_{k-1}}\\
        \end{bmatrix}
     = \Big( O, \Gamma \Big)
     \begin{bmatrix}
     {\partial \over \partial x_1} \\
     \cdot\\
     \cdot\\
     \cdot\\
     \cdot\\
    \cdot\\
     {\partial \over \partial t_{k-1,n_{k-1}}},
     \end{bmatrix},
\end{align}
where $O$ is a $ \Big(\sum_{i=1}^{k-1} n_i\Big) \times n$ zero matrix
and $\Gamma$ is an upper triangular matrix whose elements in the upper triangular region except the diagonal are polynomials.
Now combining \eqref{m3} and \eqref{m4}, we obtain that
\begin{align}\label{m5}
  \begin{bmatrix}
     X_1 \\
     \cdot\\
     X_n \\
     T_{1,1} \\
     \cdot\\
     T_{1,n_1}\\
     \cdot\\
     \cdot\\
    T_{k-1,1}\\
     \cdot\\
    T_{k-1,n_{k-1}}\\
        \end{bmatrix}
     =   \begin{bmatrix}
     A & AB \\
     O & \Gamma
        \end{bmatrix}
     \begin{bmatrix}
     {\partial \over \partial x_1} \\
     \cdot\\
     \cdot\\
     \cdot\\
     \cdot\\
    \cdot\\
     {\partial \over \partial t_{k-1,n_{k-1}}},
     \end{bmatrix}.
\end{align}
We now point out that
\begin{align}\label{m6}
        \begin{bmatrix}
     A & AB \\
     O & \Gamma
        \end{bmatrix}^{-1}
        =\begin{bmatrix}
     A^{-1} & -B\Gamma^{-1} \\
     O & \Gamma^{-1}
        \end{bmatrix},
     \end{align}
whose elements are numbers or polynomials.
As a consequence, we can obtain the following representation from \eqref{m5} and \eqref{m6}:
\begin{align}\label{m7}
    \begin{bmatrix}
     {\partial \over \partial x_1} \\
     \cdot\\
     \cdot\\
     \cdot\\
     \cdot\\
    \cdot\\
     {\partial \over \partial t_{k-1,n_{k-1}}},
     \end{bmatrix}
          =   \begin{bmatrix}
     A^{-1} & -B\Gamma^{-1} \\
     O & \Gamma^{-1}
        \end{bmatrix}
  \begin{bmatrix}
     X_1 \\
     \cdot\\
     X_n \\
     T_{1,1} \\
     \cdot\\
     T_{1,n_1}\\
     \cdot\\
     \cdot\\
    T_{k-1,1}\\
     \cdot\\
    T_{k-1,n_{k-1}}\\
        \end{bmatrix}.
\end{align}
We also point out that $T_{i,j}$ can be generated by the  Lie bracket of $\{ X_1,\ldots, X_n\}$, $i=1,\ldots,k-1$ and $j=1,\ldots n_i$.
So we fix a representation of $T_{i,j}$ from $\{ X_1,\ldots, X_n\}$ (which may not be unique). Without loss of generality, we
can take
$$ T_{i,j} =\sum a_{(l_1,\ldots, l_i)} X_{l_1}\cdots X_{l_i}.  $$

For any vector field $B=(B_1,\ldots, B_n)$ on $\mathcal G$, we now set
$$ B_{i,j} =  \sum a_{(l_1,\ldots, l_i)} X_{l_1}\cdots X_{l_{i-1}} B_{\ell_i}  $$
for $i=1,\ldots,k-1$ and $j=1,\ldots n_i$.  Then we can define a new vector $\mathcal B$ on $ \mathbb R^{n+n_1+\cdots+n_{k-1}}$  as
\begin{align}\label{new B}
\mathcal B          =   \begin{bmatrix}
     A^{-1} & -B\Gamma^{-1} \\
     O & \Gamma^{-1}
        \end{bmatrix}
  \begin{bmatrix}
     B_1 \\
     \cdot\\
     B_n \\
     B_{1,1} \\
     \cdot\\
     B_{1,n_1}\\
     \cdot\\
     \cdot\\
    B_{k-1,1}\\
     \cdot\\
    B_{k-1,n_{k-1}}\\
        \end{bmatrix}.
\end{align}

\begin{definition}\label{def curl on G}
Let all the notation be the same as above. For any vector field $B=(B_1,\ldots, B_n)$ on $\mathcal G$,
we define
\begin{align}\label{curl G}
\operatorname{curl}_{\mathcal G} B := \operatorname{curl}_{\mathbb R^{n+n_1+\cdots+n_{k-1}}} \mathcal B,
\end{align}
where $\operatorname{curl}_{\mathbb R^{n+n_1+\cdots+n_{k-1}}} \mathcal B$ is the standard curl operator on $\mathbb R^{n+n_1+\cdots+n_{k-1}}$ as pointed out in \eqref{def curl}.
\end{definition}

Then, following the proof of Theorem \ref{thm2 on H}, we can obtain Theorem \ref{thm2}. We omit the details here.




\begin{thebibliography}{00}


\bibitem{BLU} A. Bonfiglioli, E. Lanconelli, F. Uguzzoni, Stratified Lie Groups and Potential Theory for their
Sub-Laplacians. Springer Monographs in Mathematics 2007.

\bibitem{CG} M. Christ and D. Geller, Singular integral characterisations of Hardy spaces on homogeneous groups, \textit{Duke. Math.}, \textbf{%
51} (1984), 547--598.

\bibitem{CLMS} R. Coifman, P.L. Lions, Y. Meyer and S. Semmes, Compensated compactness and Hardy sapces,
\textit{J. Math. Pures Appl.}, \textbf{%
72} (1993), 247--286.


\bibitem{CRW} R. Coifman, R. Rochberg and G. Weiss,
   Factorization theorems for Hardy spaces in several variables,
   \textit{Ann. of Math. (2)},
   \textbf{103}
   (1976), 611--635.

\bibitem{D} G. Dafni, Nonhomogeneous div-curl lemmas and local Hardy spaces,
\textit{ Advances in Differential Equations}, {\bf 10} (2005), 505--526.


\bibitem{DHLWY} X.T. Duong, J. Li, I. Holmes, B.D. Wick and D.Y. Yang,
Two weight Commutators in the Dirichlet and Neumann Laplacian settings,  arXiv:1705.06858.


\bibitem{DLOWY} X.T. Duong, J. Li, Y.M. Ou, D.Y. Yang and B.D. Wick, Product BMO, little BMO and Riesz commutators
in the Bessel setting, to appear in  J. Geom. Anal.

\bibitem{DLW} X.T. Duong, J. Li and B.D. Wick,
Two weight commutators for Beurling--Ahlfors operator,  arXiv:1707.09513.

\bibitem{DLWY}
X.T. Duong, J. Li, B. D. Wick and D.Y. Yang,  Factorization for Hardy spaces and characterization for BMO spaces via commutators in the Bessel setting, \textit{ Indiana University Mathematics Journal}, {\bf66} (2017), no. 4, 1081--1106.

\bibitem {FoSt} G.B. Folland and E.M. Stein, \textit{Hardy Spaces on Homogeneous Groups}, Princetion University Press,
Princeton, N.J., 1982.

\bibitem{FTT} B. Franchi, N. Tchou and M.C. Tesi, Div-curl type theorem, H-convergence and Stokes formula
in the Heisenberg group, \textit{Comm. Contemp. Math.}, {\bf8} (2006), 67--99.

\bibitem{Gra}
L. Grafakos, Classical Fourier Analysis, Second Edition,
Graduate Texts in Mathematics,
{\bf 249},
Springer,
2008.

\bibitem{GLW} W.C. Guo, J.L. Lian and H.X. Wu,
The unified theory for the necessity of bounded commutators and applications,
arXiv:1709.08279.


\bibitem{HLW}
   I. Holmes, M. Lacey and B.D. Wick,
   Commutators in the two-weight setting,
   \textit{Math. Ann.},
   {\bf 367} (2017),
   51--80.


\bibitem{L} M. Lacey, Lectures on Nehari's Theorem on the Polydisk, Topics in harmonic analysis and ergodic theory,  \textit{Contemp. Math.}, {\bf444} (2007), 185--213.


\bibitem{Ler} A.K. Lerner, S. Ombrosi and I.P. Rivera-R\'ios, Commutators of singular integrals revisited, arXiv:1709.04724.


\bibitem{LNWW} J. Li, T. Nguyen, L.A. Ward and B.D. Wick, The Cauchy integral, bounded and compact commutators,
arXiv:1709.00703.

\bibitem{LW} J. Li and B.D. Wick, Characterizations of $H^1_{\Delta_N}(\mathbb R^n)$ and ${\rm BMO} _{\Delta_N}(\mathbb R^n)$ via Weak Factorizations and Commutators,
\textit{ Journal of Functional Analysis}, {\bf 272} (2017), 5384---5416.

\bibitem{N}
Z. Nehari, On bounded bilinear forms, \textit{Ann. of Math.}, (2) {\bf65} (1957), 153--162.

\bibitem{U} A. Uchiyama,
   The factorization of $H^{p}$ on the space of homogeneous type, \textit{Pacific J. Math.},
   {\bf92} (1981), 453--468.


\bibitem{VSC92} N.Th. Varopoulos, L. Saloff-Coste, T. Coulhon, Analysis and
geometry on groups, Cambridge Tracts in Mathematics, 100. Cambridge
University Press, Cambridge, 1992.

\end{thebibliography}


\section*{References}

\end{document}